\newtheorem{thm}{Theorem}
\newtheorem{lem}[thm]{Lemma}
\newtheorem{conjecture}{Conjecture}
\renewcommand{\l}{\left}
\renewcommand{\r}{\right}
\renewcommand{\t}{\text}
\newfont{\footsc}{cmcsc10 at 8truept}
\newfont{\footbf}{cmbx10 at 8truept}
\newfont{\bigrm}{cmr12 scaled\magstep4}
\newfont{\footrm}{cmr10 at 10truept}
\definecolor{DarkGreen}{rgb}{0,0.6,0}
\definecolor{DarkBlue}{rgb}{0,0,0.7}
\newcommand{\F}{\mathcal{F}}
\renewcommand{\P}{\mathbb{P}}
\begin{document}
\title{Ramsey Numbers \\ for \\ Non-trivial Berge Cycles}

\medskip

\author{
Jiaxi Nie\footnote{E-mail: jin019@ucsd.edu} \qquad \qquad Jacques Verstra\" ete\footnote{Research supported by NSF Award DMS-1952786. E-mail: jacques@ucsd.edu} \\
\\
Department of Mathematics \\
University of California, San Diego \\
9500 Gilman Drive \\
La Jolla CA 92093-0112.
}

\maketitle

\begin{abstract}
 In this paper, we consider an extension of cycle-complete graph Ramsey numbers to Berge cycles in hypergraphs: for $k \geq 2$, a {\em non-trivial Berge $k$-cycle} is a family of sets $e_1,e_2,\dots,e_k$ such that $e_1 \cap e_2, e_2 \cap e_3,\dots,e_k \cap e_1$ has a system of distinct
representatives and $e_1 \cap e_2 \cap \dots \cap e_k = \emptyset$. In the case that all the sets $e_i$ have size three, let $\mathcal{B}_k$ denotes the family of all non-trivial Berge $k$-cycles. The {\em Ramsey numbers} $R(t,\mathcal{B}_k)$ denote the minimum $n$ such that every $n$-vertex $3$-uniform hypergraph contains either a non-trivial Berge $k$-cycle or an independent set of size $t$. We prove
\[ R(t, \mathcal{B}_{2k}) \leq t^{1 + \frac{1}{2k-1} + \frac{4}{\sqrt{\log t}}}\]
and moreover, we show that if a conjecture of Erd\H{o}s and Simonovits~\cite{ES} on girth in graphs is true, then this is tight up to a factor $t^{o(1)}$ as $t \rightarrow \infty$.
\end{abstract}

\section{Introduction}

Let $\F$ be a family of $r$-graphs and $t\ge1$. The Ramsey numbers $R(t,\F)$ denote the minimum $n$ such that every $n$-vertex $r$-graph contains
either a hypergraph in $\F$ or an independent set of size $t$. For $k \geq 2$, a {\em Berge $k$-cycle} is a family of sets $e_1,e_2,\dots,e_k$ such that $e_1 \cap e_2, e_2 \cap e_3,\dots,e_k \cap e_1$ has a system of distinct representatives,  and a Berge cycle is {\em non-trivial} if $e_1 \cap e_2 \cap \dots \cap e_k = \emptyset$.
Let $\mathcal{B}_k^r$ denote the family of non-trivial Berge $k$-cycles all of whose sets have size $r$. When $r = 2$, $\mathcal{B}_k^2 = \{C_k\}$, where $C_k$ denotes the graph
cycle of length $k$. In this paper, we let $\mathcal{B}_k=\mathcal{B}_k^3$.

\medskip

It is a notoriously difficult problem to determine even the order of magnitude of $R(t,C_k)$ -- the cycle-complete graph Ramsey numbers.
Kim~\cite{K} proved $R(t,C_3) = \Omega(t^2/\log t)$, which gives the order of magnitude of $R(t,C_3)$ when combined with the results of Ajtai, Koml\'os and Szemer\'edi~\cite{AKS} and Shearer~\cite{She}. The current state-of-the-art results on $R(t,C_3)$ are due to Fiz Pontiveros, Griffiths and Morris~\cite{FGM} and
Bohman and Keevash~\cite{BK}, using the random triangle-free process, which determines $R(t,C_3)$ up to a small constant factor.
$$
(\frac{1}{4}-o(1))\frac{t^2}{\log t}\le R(t,C_3)\le (1+o(1))\frac{t^2}{\log t}.
$$
The case $R(t,C_4)$ is the subject of a notorious conjecture of Erd\H{o}s~\cite{Fan}, where he conjectured that $R(t,C_4)=o(t^{2-\epsilon})$ for some $\epsilon>0$. The current best upper bounds on $R(t,C_{2k})$ is 
$$O\l(\l(\frac{t}{\log t}\r)^{k/(k-1)}\r),$$ 
which come from the work of Caro, Li, Rousseau and Zhang~\cite{CL}. For $R(t,C_{2k+1})$, the best upper bound is 
$$O\l(\frac{t^{(k+1)/k}}{\log^{1/k}t}\r)$$ 
due to Sudakov~\cite{S}. Recent results using pseudorandom graphs by Mubayi and the second author~\cite{MV} give the best lower bounds on cycle-complete graph Ramsey numbers:
$$
R(C_k,n)=\Omega\l(\frac{t^{(k-1)/(k-2)}}{\log^{2/(k-2)}t}\r).
$$
In particular, via random block constructions, they show that
$$
R(C_5,t)\ge (1+o(1))t^{{11}/{8}},\ \ R(C_7,t)\ge (1+o(1))t^{{11}/{9}}.
$$

For $k \geq 3$, a {\em loose $k$-cycle} is a non-trivial Berge $k$-cycle, denoted $C_k^r$, with sets $e_1,e_2,\dots,e_k$ of size $r$ such that $|e_1 \cap e_2| = 1$, $|e_2 \cap e_3| = 1, \dots, |e_k \cap e_1| = 1$, and for any other pairs of edges $e_i$,$e_j$, $e_i \cap e_j=\emptyset$.
Ramsey type problems for loose cycles in $r$-graphs have been studied extensively~\cite{BG,CGJ,DLS,FJ,GL1,GL2,K,KMV,KMV2,M,MV}. For $r$-uniform hypergraphs with $r\ge3$, Kostochka, Mubayi and the second author~\cite{KMV} proved for all $r \geq 3$, there exist constants $a,b > 0$ such that
\begin{equation}\label{berge-triangle}
    \frac{at^{\frac{3}{2}}}{(\log t)^{\frac{3}{4}}} \leq R(t,C_3^r) \leq bt^{\frac{3}{2}},
\end{equation}
The following conjecture was proposed in~\cite{KMV}:

\begin{conjecture}\label{mainconj}
For $r,k \geq 3$,
\begin{equation}
R(t,C_k^r)=t^{\frac{k}{k-1} + o(1)}.
\end{equation}
\end{conjecture}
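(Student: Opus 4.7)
The plan is to establish the conjectured exponent $k/(k-1)$ in both directions, extending the $k=3$ case from (\ref{berge-triangle}) and the even-$k$, $r=3$ case handled by this paper's main theorem (which applies to loose cycles because $\{C_{2\ell}^3\} \subseteq \mathcal{B}_{2\ell}$, so $R(t,C_{2\ell}^3) \leq R(t,\mathcal{B}_{2\ell}) \leq t^{2\ell/(2\ell-1) + o(1)}$).

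\textbf{Upper bound.} My approach is to lift the even-$k$, $r=3$ strategy to general $(k,r)$. First I would prove a Tur\'an-type density bound: an $n$-vertex $\mathcal{B}_k^r$-free $r$-graph has at most $n^{1+1/(k-1)+o(1)}$ edges. The heuristic is a BFS/Moore-bound argument, where a BFS-tree rooted at any edge must expand by a factor $\approx n^{1/(k-1)}$ at each level until depth $\lfloor (k-1)/2 \rfloor$; the non-triviality condition $e_1\cap\cdots\cap e_k=\emptyset$ is precisely what prevents BFS layers from collapsing through a common ``apex'' vertex. Second, I would run a semi-random greedy independent-set algorithm (Ajtai--Koml\'os--Szemer\'edi + Shearer, adapted to $r$-graphs with bounded pair-degree) to extract an independent set of size $\Omega(n^{(k-1)/k-o(1)})$ from any $n$-vertex $C_k^r$-free $r$-graph, iteratively deleting high-degree vertices before each random sampling step.

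\textbf{Lower bound.} I would construct $C_k^r$-free $r$-graphs with small independence number via a random algebraic or random block model. One candidate is a random $r$-graph with edge probability $p \approx n^{-(k-2)/(k-1)}$ followed by deletion of the $O(n^{k/(k-1)})$ expected loose $k$-cycles; the task is to verify that the independence number $\alpha = O(n^{(k-1)/k+o(1)})$ survives. A second candidate is an $r$-uniform lift of a Mubayi--Verstra\"ete pseudorandom graph, with care taken that the lifting operation does not create short loose cycles (by assigning the extra $r-2$ ``connector'' vertices per hyperedge from disjoint pools indexed by the base edge).

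\textbf{Main obstacle.} The hardest case is the upper bound for \emph{odd} $k$. Even in graphs ($r = 2$) the best known bound $R(t,C_{2k+1}) \leq t^{(k+1)/k + o(1)}$ of Sudakov is strictly larger than the conjectured $t^{(2k+1)/(2k) + o(1)}$, so a proof of Conjecture \ref{mainconj} for odd $k$ must either subsume a breakthrough on odd-cycle graph Ramsey numbers or exploit hypergraph-specific phenomena (for instance a codegree/pair-density bound) unavailable in the graph setting. A plausible opening is to show that a loose odd-cycle-free $r$-graph of appropriate density necessarily contains a non-trivial Berge \emph{even} cycle, reducing odd $k$ to the even case already controlled by this paper.
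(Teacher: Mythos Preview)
The statement you are attempting to prove is Conjecture~\ref{mainconj}, which is \emph{open}; the paper does not prove it and offers no argument for either direction beyond the $k=3$ case cited in~(\ref{berge-triangle}). So there is no ``paper's own proof'' to compare against, and any proposal must be judged on its own merits.

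Your opening reduction contains a genuine error: the monotonicity of $R(t,\cdot)$ in the forbidden family goes the other way. Since $\{C_{2\ell}^3\}\subseteq\mathcal{B}_{2\ell}$, every $\mathcal{B}_{2\ell}$-free $3$-graph is in particular $C_{2\ell}^3$-free, hence
\[
R(t,\mathcal{B}_{2\ell})\ \le\ R(t,C_{2\ell}^3),
\]
not the reverse. Theorem~\ref{thm:nbc} therefore gives no upper bound on $R(t,C_{2\ell}^3)$; indeed the paper explicitly records M\'eroueh's bound $R(t,C_k^3)=O(t^{1+1/\lfloor (k+1)/2\rfloor})$ as the state of the art for loose cycles, which for even $k=2\ell$ is only $t^{1+1/\ell+o(1)}$, far from the conjectured $t^{1+1/(2\ell-1)+o(1)}$. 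The proof of Theorem~\ref{thm:nbc} genuinely uses the full $\mathcal{B}_{2k}$-freeness (in Case~1 one needs that \emph{every} way of closing the $2k$-cycle in $G$ into a Berge cycle in $H_2$ is forbidden, forcing $v_1=\cdots=v_{2k}$), so it does not transfer to loose cycles.

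The remainder of the proposal is a plan rather than a proof, and the concrete steps do not reach the target. The Tur\'an-type bound you posit, $\mathrm{ex}(n,\mathcal{B}_k^r)\le n^{1+1/(k-1)+o(1)}$, is itself unproven for $C_k^r$ and is not what drives Theorem~\ref{thm:nbc} (which instead uses supersaturation for graph $C_{2k}$'s). On the lower-bound side, the random $r$-graph with deletion does not give exponent $k/(k-1)$: with $p$ chosen so that $\alpha\approx n^{(k-1)/k}$, the expected number of loose $k$-cycles is of order $n^{k(r-1)}p^k$, which already exceeds the edge count for all $r,k\ge 3$, so deletion destroys the graph. (This is why the paper's lower bound, Theorem~\ref{thm:rbc}, uses a structured construction seeded by a high-girth graph, and even that is conditional on the Erd\H{o}s--Simonovits conjecture.) Your odd-$k$ reduction idea is reasonable as a heuristic but is not an argument; the paper lists exactly this obstacle in its concluding remarks.
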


The conjecture is true for $k = 3$ due to (\ref{berge-triangle}). It is shown in~\cite{NV} that $R(t,C_4^3) \leq t^{4/3 + o(1)}$. M\'{e}roueh~\cite{M} showed $R(t,C_k^3) = O(t^{1 + 1/\lfloor (k + 1)/2\rfloor})$ for $k \geq 3$ and $R(t,C_k^r) = O(t^{1 + 1/\lfloor k/2\rfloor})$ for $r\ge 4$ and every odd integers $k\ge 5$, improving earlier results of
Collier-Cartaino, Graber and Jiang~\cite{CGJ}. Conjecture \ref{mainconj} motivates our current study of non-trivial Berge $k$-cycles.
In support of the above conjecture, we prove the following result for non-trivial Berge cycles of even length:
\begin{thm}\label{thm:nbc}
For $k \geq 3$, and $t$ large enough,
\begin{equation*}
R(t,\mathcal{B}_{2k}) \leq t^{\frac{2k}{2k-1} + \frac{4}{\sqrt{\log t}}}.
\end{equation*}
\end{thm}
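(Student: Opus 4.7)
The strategy combines an extremal bound for non-trivial Berge $2k$-cycles in $3$-uniform hypergraphs with a hypergraph analogue of the Ajtai--Koml\'{o}s--Szemer\'{e}di independence theorem, in the same spirit as the cycle-complete graph Ramsey bounds.

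\textbf{Step 1 (Extremal bound).} The first task is to prove that every $3$-uniform hypergraph $H$ on $n$ vertices with no member of $\mathcal{B}_{2k}$ satisfies
\[
|E(H)| \;\le\; C\, n^{1 + 1/k}\, (\log n)^{c},
\]
where $C, c$ depend only on $k$. The plan is to form an auxiliary multigraph $G$ on $V(H)$ by assigning to each hyperedge $e = \{a,b,c\}$ a single pair $\phi(e)\in\{ab,ac,bc\}$, chosen uniformly at random. The Bondy--Simonovits theorem applied to the underlying simple graph of $G$ shows that once $|E(G)|$ exceeds $C' n^{1+1/k}$, the graph $G$ contains many copies of $C_{2k}$. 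Any such $C_{2k}$ with $2k$ distinct underlying hyperedges lifts to a Berge $2k$-cycle in $H$, the cycle vertices forming a system of distinct representatives. The delicate point is to guarantee non-triviality: I would either first prune all vertices of degree exceeding $n^{1/k}(\log n)^{O(1)}$ (so no $2k$ hyperedges in the surviving hypergraph can share a vertex of that degree), or estimate directly that the number of ``trivial'' cycles --- those whose $2k$ hyperedges share a common vertex, and which are therefore supported on the link graph of that vertex --- is dominated by the total count.

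\textbf{Step 2 (Hypergraph independence).} The second ingredient is a hypergraph Ajtai--Koml\'{o}s--Szemer\'{e}di-type theorem: if $H$ is a $3$-uniform hypergraph on $n$ vertices with average degree $D$, linear (codegree at most one), and with few Berge triangles, then
\[
\alpha(H) \;\ge\; c\, n\, \sqrt{\log D}\,/\sqrt{D}.
\]
The linearity and sparsity assumptions are enforced by random sampling and cleaning: keep each hyperedge with probability $p$, then delete hyperedges to destroy all codegree-$2$ pairs and Berge triangles, at a polylogarithmic cost in $D$, following the methods of~\cite{KMV,NV}.

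\textbf{Step 3 (Combination).} Set $n = t^{2k/(2k-1) + 4/\sqrt{\log t}}$ and suppose, for contradiction, that $H$ is an $n$-vertex $3$-graph with no non-trivial Berge $2k$-cycle and with $\alpha(H) < t$. Step~1 gives average degree $\bar D \le n^{1/k}(\log n)^{O(1)}$, and passing to a subhypergraph of minimum degree at least $\bar D/2$ and applying Step~2 yields
\[
\alpha(H) \;\ge\; c\, n\, \sqrt{\log n}\,/\,\sqrt{\bar D} \;\ge\; c\, n^{1 - 1/(2k)}\, (\log n)^{-O(1)}.
\]
A short computation shows $n^{1 - 1/(2k)} = t^{1 + \Omega(1/\sqrt{\log t})}$, so the quasipolynomial factor $t^{\Omega(1/\sqrt{\log t})} = e^{\Omega(\sqrt{\log t})}$ swamps the $(\log n)^{O(1)} = (\log t)^{O(1)}$ correction and forces $\alpha(H) \ge t$, a contradiction.

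\textbf{Main obstacle.} The hardest step is Step~1, specifically ensuring that a $C_{2k}$ extracted from $G$ lifts to a \emph{non-trivial} Berge $2k$-cycle. Cycles whose $2k$ hyperedges all share a common vertex do not contradict the hypothesis on $H$, and a single vertex of very large degree can in principle contribute a disproportionate number of such trivial cycles. Controlling this contribution --- by a preliminary pruning of high-degree vertices combined with a careful moment estimate on the link of each remaining vertex --- is the main technical difficulty.
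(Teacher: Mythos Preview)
Your Step 1 claim is false as stated. Take the ``star'' $3$-graph consisting of all triples through a fixed vertex $v$: every Berge cycle in it has $v$ in the common intersection and is therefore trivial, so this $3$-graph is $\mathcal{B}_{2k}$-free, yet it has $\binom{n-1}{2} = \Theta(n^2)$ edges. Hence there is no Tur\'an-type bound of the form $|E(H)| \le C n^{1+1/k}(\log n)^c$ for $\mathcal{B}_{2k}$-free $3$-graphs, and your pruning suggestion does not rescue this: in the star example, pruning vertices of degree exceeding $n^{1/k}(\log n)^{O(1)}$ removes $v$ and with it every edge. The crux is that ``having a small independent set'' and ``having few edges'' are decoupled for this family, so one cannot first bound edges and then invoke an AKS-type lemma.

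The paper circumvents this by never proving an edge bound. Instead it passes (Lemma~\ref{dmaxlemma}) to an induced subgraph $H_0$ with $\Delta(H_0) \le d(H_0)/\epsilon$ for $\epsilon = e^{-\sqrt{\log_2 n}}$, losing a factor $n^{O(1/\sqrt{\log n})}$ in the vertex count --- and this quasi-polynomial loss is precisely the source of the $t^{4/\sqrt{\log t}}$ in the statement, not an artifact of a weak independence lemma. On $H_0$ one uses M\'eroueh's lemma (Lemma~\ref{heavy}) and a random $3$-colouring to get a subgraph $H_2$ in which each edge has a designated codegree-one pair and a designated ``apex'' vertex; the codegree-one pairs form a graph $G$ with $|G| = |H_2|$. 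If the average degree of $G$ is large, supersaturation (Lemma~\ref{FS}) gives many $C_{2k}$'s, and pigeonhole places many of them through a single edge $e$; Lemma~\ref{bigcpn} then shows their union has more than $D_0$ distinct edges. But if every such $C_{2k}$ lifted to a trivial Berge cycle, all of these edges would lie in the link of a single apex vertex $z$, forcing $d_{H_0}(z) > D_0$, a contradiction. Otherwise $d(H_0)$ is small and the elementary bound $\alpha(H_0) \ge \frac{2}{3} v(H_0)/\sqrt{d(H_0)}$ (Lemma~\ref{alphabound}) already suffices --- no hypergraph AKS refinement is needed, and indeed your Step~2 hypotheses (linearity, few Berge triangles) are not available here without further work.
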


Erd\H{o}s and Simonovits~\cite{ES} conjectured that there exists an $n$-vertex graph of girth more than $2k$ with $\Theta(n^{1+1/k})$ edges. This notoriously difficult conjecture remains open, except when $k\in\{2,3,5\}$, largely due to the existence of generalized polygons~\cite{Benson,vanMaldeghem,V}. Towards this conjecture, Lazebnik, Ustimenko and Woldar~\cite{LUW} gave the densest known construction, which has $\Omega(n^{1+2/(3k-2)})$ edges.
We prove the following theorem relating this conjecture to lower bounds on Ramsey numbers for non-trivial Berge cycles:

\begin{thm}\label{thm:rbc}
Let $k\ge2$, $r\ge3$. Suppose there exists an $n$-vertex graph of girth more than $2k$ with $cn^{1+1/k}$ edges for any integer $n$ large enough and some positive constant $c$. Then for $t$ large enough and some positive constant $c_{k,r}$ dependent on $k$ and $r$,
\begin{equation}
R(t,\mathcal{B}^r_{k}) \geq c_{k,r}\l(\frac{t}{\log t}\r)^{\frac{k}{k-1}}.
\end{equation}
\end{thm}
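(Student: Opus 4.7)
The plan is to construct an $r$-uniform hypergraph $H$ from the hypothesized graph $G$ so that $H$ has many vertices, no non-trivial Berge $k$-cycle, and independence number below $t$. Take $G$ on $n$ vertices with girth exceeding $2k$ and $cn^{1+1/k}$ edges, with $n$ chosen near $(t/\log t)^{k/(k-1)}$. Introduce an auxiliary ``padding'' vertex set $W$ disjoint from $V(G)$, and for each $e = uv \in E(G)$ add to $H$ a random hyperedge $h_e = \{u,v\} \cup S_e$, where $S_e$ is a uniformly chosen $(r-2)$-subset of $W$ drawn independently across edges.

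To show $H$ has no non-trivial Berge $k$-cycle, suppose hyperedges $h_{f_1}, \ldots, h_{f_k}$ with $f_i$ distinct edges of $G$ and an SDR $a_1, \ldots, a_k$ formed one. Each $a_i \in h_{f_i} \cap h_{f_{i+1}}$ lies in $V(G)$ or in $W$. If every $a_i$ lies in $V(G)$, then each $a_i$ is a common endpoint of $f_i$ and $f_{i+1}$, so $f_1, \ldots, f_k$ trace a closed walk of length $k$ through $k$ distinct vertices, i.e.\ a $k$-cycle in $G$, contradicting the girth hypothesis since $k \leq 2k$. Otherwise some $a_i \in W$ forces the random coincidence $a_i \in S_{f_i} \cap S_{f_{i+1}}$; a first-moment computation bounds the expected number of such ``mixed'' configurations by something that becomes $o(|E(G)|)$ once $|W|$ is sufficiently large, and a standard alteration deletes one hyperedge per bad configuration, killing all surviving non-trivial Berge $k$-cycles at negligible cost.

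For the independence bound, write any $t$-subset $T$ as $T = A \cup B$ with $A \subseteq V(G)$ and $B \subseteq W$. The set $T$ is independent in $H$ iff $S_e \not\subseteq B$ for every $e \in E(G[A])$, so
\[
\Pr[T \text{ independent}] \;\leq\; \prod_{e \in E(G[A])}\left(1 - \frac{\binom{|B|}{r-2}}{\binom{|W|}{r-2}}\right) \;\leq\; \exp\left(-e(G[A])\left(\frac{|B|}{|W|}\right)^{r-2}\right).
\]
A pseudorandom estimate $e(G[A]) \gtrsim |A|^2 |E(G)|/n^2$ (obtained, if needed, by first passing to a random subgraph of $G$ to homogenize edge density while preserving the girth and the edge count up to constants), combined with a union bound over the pairs $(A,B)$ with $|A|+|B|=t$, will give $\alpha(H) < t$ with positive probability, provided the parameters are properly balanced.

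The hard part will be reconciling the two competing constraints on $|W|$: the Berge step needs $|W|$ large to suppress random coincidences, while the independence step needs $|W|$ comparable to $n$ so that $(|B|/|W|)^{r-2}$ remains large for typical $B$. The extremum should occur at $|W| \asymp n \asymp (t/\log t)^{k/(k-1)}$, at which point $|V(H)| = n + |W|$ matches the target up to constants and establishes the Ramsey lower bound. The bulk of the technical work will lie in verifying the pseudorandom edge-count estimate uniformly across the relevant $A$, either by exploiting specific structure of the extremal graph $G$ or by a random sparsification argument applied at the outset.
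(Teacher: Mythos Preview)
Your construction has a fatal structural flaw: every hyperedge $h_e = \{u,v\} \cup S_e$ meets $W$ (since $|S_e| = r-2 \geq 1$), so the entire set $V(G)$ is independent in $H$. This forces $\alpha(H) \geq n$, whereas you need $\alpha(H) < t$ with $t \asymp n^{1-1/k}\log n \ll n$; no tuning of $|W|$ can repair this. Your own independence formula already records the problem: when $B = \emptyset$ the bound $\exp\!\big(-e(G[A])(|B|/|W|)^{r-2}\big)$ equals $1$, and indeed every $T \subseteq V(G)$ is automatically independent. The auxiliary ``pseudorandom estimate'' $e(G[A]) \gtrsim |A|^2|E(G)|/n^2$ is separately false for high-girth graphs in the relevant range---such $G$ already have independent sets of size $\Omega(n^{1-1/k}\log n)$ by Shearer's bound---and passing to a random subgraph of $G$ does not create edges inside those sets.

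The paper avoids this by placing the random $r$-graph \emph{inside} neighbourhoods of $G$ rather than padding edges with an external vertex pool. After regularizing to a bipartite $G'$ with parts $X,Y$ and all degrees $\Theta(n^{1/k})$, it builds $H$ on vertex set $Y$ by dropping, for each $x \in X$ independently, a random union of $m \asymp \log n$ vertex-disjoint $r$-uniform stars onto $N_{G'}(x)$. Hyperedges coming from a single $x$ share their star centre, so any Berge cycle among them is trivial; hyperedges from distinct $x$'s cannot close a Berge $k$-cycle because that would yield a cycle of length at most $2k$ in $G'$. Crucially, every vertex of $Y$ lies in $\Theta(n^{1/k})$ of these independent random gadgets, so any $t$-set $I \subseteq Y$ sends at least $cn^{1/k}t$ edges into $X$; a direct product bound over $x \in X$ (Lemma~\ref{prob}) then beats the union bound, with no pseudorandomness hypothesis on $G$ required.
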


This shows that if the Erd\H{o}s-Simonovits Conjecture is true, then Theorem \ref{thm:nbc} is tight up to a $t^{o(1)}$ factor. Indeed, following the proof of Theorem~\ref{thm:rbc},
the known construction of Lazebnik, Ustimenko and Woldar~\cite{LUW} would give a weaker lower bound of $\Omega((t/\log t)^{(3k-2)/(3k-4)})$.

\medskip

Let $B_k$ be the family of $3$-uniform Berge $k$-cycles without non-triviality. Random graphs together with the Lov\'asz local lemma give $R(t, B_k)\ge t^{(2k-2)/(2k-3)-o(1)}$, see~\cite{AS} for similar computation. We prove the following theorem, which gives a substantially better lower bound for $B_4$ if the Erd\H{o}s-Simonovits Conjecture is true.

\begin{thm}\label{thm:4bc}
Suppose there exists an $n$-vertex graph of girth more than $8$ with $c_1n^{5/4}$ edges for any integer $n$ large enough and some positive constant $c_1$. Then for $t$ large enough and some positive constant $c_2$,
$$
R(t,B_4)\ge \l(\frac{c_2t}{\sqrt{\log t}}\r)^{16/13}.
$$
\end{thm}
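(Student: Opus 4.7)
The plan is to construct a linear $3$-uniform hypergraph $H$ from the hypothesized girth-$>8$ graph $G$ whose independence number matches the Duke--Lefmann--R\"odl lower bound $\alpha(H)\gtrsim |V(H)|/\sqrt{\Delta(H)}\cdot\sqrt{\log\Delta(H)}$ and which avoids all (trivial and non-trivial) Berge $4$-cycles. After passing to a bipartite subgraph if necessary, fix bipartite $G$ with parts $A$ and $B$ of size $n/2$, having $\Omega(n^{5/4})$ edges and near-regular of degree $d=\Theta(n^{1/4})$. For each $b\in B$, I would select independently and uniformly at random a partial Steiner triple system $\mathcal{T}_b\subseteq\binom{N_G(b)}{3}$ of size $\Theta(d^{3/2})$ whose shadow graph on $N_G(b)$ is $C_4$-free; such $\mathcal{T}_b$ exist by triangle-packing inside a $C_4$-free host graph on $d$ vertices. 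Set $V(H):=A$ and $E(H):=\bigsqcup_{b\in B}\mathcal{T}_b$.

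I would then verify that $H$ has no Berge $4$-cycle. The girth condition on $G$ forces $H$ to be linear: two distinct hyperedges sharing two vertices $a,a'\in A$ would have $B$-witnesses $b,b'$ both in $N_G(a)\cap N_G(a')$, so $b=b'$ by the girth $>4$ hypothesis, and then the PSTS property of $\mathcal{T}_b$ gives the two hyperedges equal. Linearity immediately forbids trivial Berge $4$-cycles, since any four hyperedges through a common vertex $v$ pairwise intersect in $\{v\}$, leaving no SDR with distinct elements. For a non-trivial Berge $4$-cycle with SDR $x_1,\ldots,x_4$ and witnesses $b_1,\ldots,b_4$, the walk $x_1b_2x_2b_3x_3b_4x_4b_1x_1$ has length $8$ in $G$; a short case analysis on coincidences among the $b_i$ shows that every partial coincidence produces a $C_4$ or $C_6$ in $G$, contradicting the girth. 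The only survivors are therefore the case in which all $b_i$ are distinct (yielding a forbidden $C_8$) and the case in which all $b_i$ coincide, where the Berge $4$-cycle lies in a single $\mathcal{T}_b$ and is ruled out by the $C_4$-free shadow condition on $\mathcal{T}_b$.

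To bound $\alpha(H)$, fix $S\subseteq A$ with $|S|=s$ and let $s_b:=|S\cap N_G(b)|$, so $\sum_b s_b=sd$. Quasi-uniformity of the random triangle packing $\mathcal{T}_b$ inside a $C_4$-free graph on $d$ vertices gives $\Pr\bigl[\mathcal{T}_b\cap\binom{S\cap N_G(b)}{3}=\varnothing\bigr]\le \exp(-c\,s_b^3/d^{3/2})$. Independence of the $\mathcal{T}_b$ and convexity of $x\mapsto x^3$ then yield
\[
\Pr[S\text{ is independent in }H]\le \exp\left(-c'\,\frac{s^3 d^{3/2}}{n^2}\right)=\exp\left(-c'\,\frac{s^3}{n^{13/8}}\right),
\]
and a union bound over all $s$-subsets of $A$ shows $\Pr[\alpha(H)\ge s]=o(1)$ once $s\ge C n^{13/16}\sqrt{\log n}$. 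Fixing a realization with $\alpha(H)\le Cn^{13/16}\sqrt{\log n}$ and writing $t=\alpha(H)$ yields $|V(H)|=n/2\ge \Omega((t/\sqrt{\log t})^{16/13})$, so $R(t,B_4)\ge (c_2 t/\sqrt{\log t})^{16/13}$ for a suitable $c_2$.

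The main technical obstacle will be the first step: producing partial Steiner triple systems $\mathcal{T}_b$ of size as close to $d^{3/2}/3$ as possible while preserving the $C_4$-free shadow condition, and ensuring that the random choice is quasi-uniform enough that the first-moment estimate $\exp(-c\,s_b^3/d^{3/2})$ is genuinely achievable. The exponent $16/13$ is not chosen arbitrarily: it emerges as the unique balance between the maximum degree $\Delta(H)\sim d^{3/2}$ of such an $H$ and the Duke--Lefmann--R\"odl bound $\alpha(H)\gtrsim |V(H)|/\sqrt{\Delta(H)}$, with $d=\Theta(n^{1/4})$ coming directly from the hypothetical graph's density.
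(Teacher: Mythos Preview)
Your plan is essentially the paper's proof. The paper also places a random $B_4$-free linear triple system in each neighbourhood $N_G(b)$, checks via the girth hypothesis that no Berge $4$-cycle can be assembled across neighbourhoods, and bounds $\alpha(H)$ by a first-moment argument with Jensen. For your ``main technical obstacle'' the paper simply cites a construction of Lazebnik and Verstra\"ete: for every $d$ there is a linear $B_4$-free $3$-graph $J_d$ on $d$ vertices with $\Theta(d^{3/2})$ edges and maximum degree $O(\sqrt d)$. A \emph{uniformly random copy} of this fixed $J_d$ is placed on each neighbourhood (only the bijection is random), and the bound $\Pr[S\cap N_G(b)\text{ independent}]\le\exp(-c\,s_b^3/d^{3/2})$ is obtained by a two-term inclusion--exclusion rather than any quasi-randomness of the packing.

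There is one point you gloss over that the paper handles explicitly. Inclusion--exclusion (and likewise Janson) only delivers the bound $\exp(-c\,s_b^3/d^{3/2})$ when $s_b\lesssim\sqrt d$; for larger $s_b$ you only get a constant upper bound strictly below $1$. So your Jensen step over \emph{all} $b$ is not justified as stated. The paper splits into two cases: if many $b$ have large $s_b$, the product of constant bounds already beats $\binom{n}{t}$; if few do, the paper uses the girth-$>8$ hypothesis a \emph{second} time (via the extremal bound $\mathrm{ex}(m,C_8)=O(m^{5/4})$ on the bipartite graph between $A$ and those exceptional $b$'s) to show that the exceptional $b$'s contribute negligibly to $\sum_b s_b$, so Jensen over the remaining $b$'s still gives $(sd)^3/n^2$ up to lower order. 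This extra use of the girth condition is easy to miss. Your near-regularity assumption also needs a short argument (the paper's Lemma~\ref{deg}): pass to a subgraph of minimum degree $\Omega(n^{1/4})$ by deletion, then use the girth to cap the maximum degree at $O(n^{1/4})$.
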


In fact, this is also a lower bound for $R(t,\{B_2,B_3,B_4\})$. A natural $3$-uniform analog of the Erd\H{o}s-Simovits conjecture is that there exist $n$-vertex $\{B_2,B_3,\dots,B_{k}\}$-free 3-graphs with $n^{1+1/\lfloor k/2\rfloor-o(1)}$ edges. This is true for $k=3$ due to Ruzsa and Szemeredi~\cite{RS}. The proof of Theorem~\ref{thm:4bc} makes use of the fact that there exist $n$-vertex $\{B_2,B_3,B_4\}$-free 3-graphs with $\Omega(n^{3/2})$ edges, that is, the conjecture is true for $k=4$, which is due to Lazebnik and the second author~\cite{LV}. More generally, following the proof of Theorem~\ref{thm:4bc}, if the $3$-uniform analog of the Erd\H{o}s-Simonovits Conjecture is true, then we have $R(t,\{B_2,B_3,\dots,B_{2k}\})\ge t^{2k^2/(2k^2-k-2)-o(1)}$ and $R(t,\{B_2,B_3,\dots,B_{2k+1}\})\ge t^{2k(k-1)/(2k^2-3k-1)-o(1)}$, which are substantially better than the lower bounds obtained by random graphs.    

We prove Theorem~\ref{thm:nbc} in Section~\ref{section:nbc}, Theorem~\ref{thm:rbc} in Section~\ref{section:rbc} and Theorem~\ref{thm:4bc} in Section~\ref{section:4bc}. Theorem \ref{thm:rbc} is valid for
all values of $k\ge 2$ and $r\ge 3$, while Theorem \ref{thm:nbc} only works for even values of $k$ and $r=3$. We believe that Theorem \ref{thm:nbc} should extend to odd values of $k$ and all $r \geq 3$:

\begin{conjecture}
For all $r,k \geq 3$,
\begin{equation}
R(t,\mathcal{B}_k^r) \leq t^{\frac{k}{k-1} + o(1)}.
\end{equation}
\end{conjecture}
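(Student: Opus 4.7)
The strategy is to extend the proof of Theorem~\ref{thm:nbc} (which handles even $k$ and $r=3$) to all $r \geq 3$ and both parities of $k$ via a random trace reduction to the graph cycle Ramsey problem. Given an $r$-graph $H$ on $n$ vertices with no hypergraph in $\mathcal{B}_k^r$, I would choose independently and uniformly at random, for each edge $e \in E(H)$, a pair $\pi(e) \in \binom{e}{2}$, and let $G$ be the graph on $V(H)$ with edge set $\{\pi(e) : e \in E(H)\}$. Any $k$-cycle $v_1 v_2 \cdots v_k$ in $G$ whose edges arise from $k$ distinct hyperedges $e_1, \dots, e_k$ of $H$ automatically witnesses the SDR $(v_1, \dots, v_k)$ for the consecutive intersections $e_i \cap e_{i+1}$, so such a cycle produces a Berge $k$-cycle in $H$. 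A standard independence-transfer step, as in~\cite{KMV, NV}, converts a large independent set in $G$ into one in $H$ with at most a constant-factor loss per edge, so the upper bound on $R(t, \mathcal{B}_k^r)$ reduces to one on $R(t, C_k)$ — provided the graph cycle produced is non-trivial.

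\textbf{Handling trivial configurations.} The subtlety absent from the even/$r=3$ case is that the $k$-cycle produced above may fail the non-triviality condition if all $k$ hyperedges share a common vertex. I would address this by first deleting every vertex of $H$ of degree exceeding a threshold $d = n^{1/(k-1)+o(1)}$: a Kruskal--Katona-type counting shows that only a negligible fraction of edges are destroyed, while afterwards the link of any single vertex, an $(r-1)$-graph with at most $d$ edges, is too sparse to supply $k-1$ hyperedges completing a non-trivial Berge cycle at that vertex. On the pruned $r$-graph, any $k$-cycle in the traced graph $G$ therefore does lift to a non-trivial Berge $k$-cycle in $H$, so $G$ must be $C_k$-free. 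Applying the appropriate bound for $R(t, C_k)$ then yields the desired independent set.

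\textbf{Main obstacle.} The quality of this reduction is controlled entirely by the available upper bound on $R(t, C_k)$. For even $k = 2\ell$, the Caro--Li--Rousseau--Zhang bound matches the target $t^{k/(k-1)+o(1)}$ exactly, so this plan should give the conjecture for all $r \geq 3$ with essentially no change from Theorem~\ref{thm:nbc}. The real difficulty lies in the \emph{odd} case: Sudakov's bound $R(t, C_{2\ell+1}) = O(t^{(\ell+1)/\ell}/\log^{1/\ell}t)$ is polynomially weaker than the conjectured $t^{(2\ell+1)/(2\ell)+o(1)}$, so a direct trace argument falls short by a factor $t^{1/(2\ell)-o(1)}$. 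Overcoming this requires either an improvement on the cycle-complete graph Ramsey number itself — a notorious open problem — or a genuinely hypergraph argument that exploits the fact that avoiding $\mathcal{B}_{2\ell+1}^r$ is a much stronger constraint than avoiding a single copy of $C_{2\ell+1}$: a non-trivial Berge cycle only requires one SDR among many possible ones. A promising route is to run the random trace $\Theta(\log n)$ times in parallel and invoke the Lovász local lemma to find a trace in which every short cycle of $G$ certifies a non-trivial Berge cycle of $H$, effectively converting the multiplicity of SDRs into an extra logarithmic savings in the density of $G$.
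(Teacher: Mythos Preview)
This statement is an open \emph{conjecture} in the paper, not a theorem; the paper gives no proof, and in the concluding remarks explicitly lists the obstacles (lack of supersaturation for odd cycles; no $r$-uniform analog of Lemmas~\ref{FS} and~\ref{bigcpn} for $r\ge 4$). So there is no paper proof to compare against, and your plan should be read as an attack on an open problem.

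The central gap is your treatment of triviality. Deleting vertices of degree above $d=n^{1/(k-1)+o(1)}$ does \emph{not} make the link of a surviving vertex ``too sparse to supply'' a trivial Berge $k$-cycle: any vertex $z$ with $d_H(z)\ge k$ already has enough hyperedges through it, and once you trace to random pairs, those pairs can easily form a $C_k$ entirely inside the link of $z$. Concretely, if $z$ has degree $d$, its link contributes up to order $d^k$ copies of $C_k$ to the traced graph $G$, so $G$ will typically contain many $C_k$'s all of which lift only to \emph{trivial} Berge $k$-cycles. Hence $G$ is not $C_k$-free and the reduction to $R(t,C_k)$ collapses, already for even $k$. (Incidentally, the independence transfer needs no ``constant-factor loss'': an independent set in $G$ is automatically independent in $H$, since $\pi(e)\subset e$.) This is precisely why the paper's proof of Theorem~\ref{thm:nbc} does \emph{not} proceed by a trace reduction: it instead passes to a subgraph in which every triple has a codegree-$1$ pair (Lemma~\ref{heavy}), uses supersaturation (Lemma~\ref{FS}) to find many $C_{2k}$'s through a fixed graph edge $e$, and then argues that all of these must share the \emph{same} apex $z$, forcing $d_H(z)$ to exceed the max-degree bound from Lemma~\ref{dmaxlemma}. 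Your plan does not reproduce this mechanism, and without it the triviality issue is fatal. For odd $k$ you correctly identify the further obstacle that $R(t,C_{2\ell+1})$ itself is too weak; the Local-Lemma idea you sketch would at best buy logarithmic factors, not the missing $t^{1/(2\ell)}$.
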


\medskip

{\bf Notation and terminology.} For a hypergraph $H$, let $V(H)$ denote the vertex set of $H$, $v(H) = |V(H)|$ and let $|H|$ be the number of edges in $H$. If all edges of $H$ have size $r$, we say $H$ is an {\em $r$-uniform hypergraph}, or an {\em $r$-graph} for short. For $v \in V(H)$, let $d_H(v) = |\{e \in H : v \in e\}|$ be the {\em degree of $v$} in $H$. We denote the average degree of $H$ by $d(H)$, denote the minimum degree of $H$ by $\delta(H)$, and the maximum degree of $H$ by $\Delta(H)$. For $u,v \in V(H)$, let $d_H(u,v) = |\{w : uvw \in H\}|$ denote the {\em codegree} of the pair $\{u,v\}$. An {\em independent set} in a hypergraph is a set of vertices containing no edge of the hypergraph. Let $\alpha(H)$ denote the largest size of an independent set in a hypergraph $H$.

\section{Proof of Theorem~\ref{thm:rbc}}\label{section:rbc}

We will use the following lemma to get a large bipartite subgraph with large minimum degree and small maximum degree:

\begin{lem}\label{deg}
Let $k \geq 3$, $c>0$, and let $G$ be an $n$-vertex graph of girth more than $2k$ with more than $2cn^{1+1/k}$ edges. Then there exists a bipartite subgraph $G'$ of $G$ such that $\delta(G')\ge cn^{1/k}$, $\Delta(G')\le n^{1/k}/c^{k-1}$, and $v(G')\ge c^kn$.
\end{lem}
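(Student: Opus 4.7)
The plan is a standard degree-regularization combined with a Moore-type bound: first pass to a bipartite subgraph, then iteratively prune low-degree vertices, and finally exploit the girth hypothesis through BFS from a maximum-degree vertex to control both $\Delta(G')$ and $v(G')$ simultaneously.

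Concretely, I would first take a maximum bipartite subgraph $G_0 \subseteq G$, which retains at least half the edges, so $|G_0| > cn^{1+1/k}$. Next, I would iteratively delete every vertex whose current degree has dropped below $cn^{1/k}$, terminating with a subgraph $G'$ of minimum degree at least $cn^{1/k}$. Because each deletion destroys fewer than $cn^{1/k}$ edges and at most $n - v(G')$ vertices get deleted in total, the surviving edges satisfy
\[|G'| > cn^{1+1/k} - (n - v(G')) \cdot cn^{1/k} = v(G') \cdot cn^{1/k},\]
so the average degree of $G'$ exceeds $2cn^{1/k}$ and, in particular, $\Delta(G') > 2cn^{1/k}$. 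Now I would apply the Moore-type bound: letting $v_0$ be a vertex realizing $\Delta(G') = D$, I would run BFS from $v_0$ and observe that, since $G'$ inherits girth greater than $2k$, each vertex at BFS-depth $i \in \{1,\dots,k-1\}$ contributes at least $cn^{1/k} - 1$ new neighbors at depth $i+1$, and across different parents these new neighbors must all be distinct (otherwise two BFS paths combine into a closed walk producing a cycle of length at most $2k$). This gives $v(G') \geq D(cn^{1/k}-1)^{k-1}$. Combining with the lower bound $D > 2cn^{1/k}$ from the edge count yields $v(G') \geq c^k n$, while combining with the trivial bound $v(G') \leq n$ yields $\Delta(G') \leq n^{1/k}/c^{k-1}$.

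The only real obstacle is arithmetic rather than conceptual: the Moore bound naturally produces factors of $(cn^{1/k} - 1)^{k-1}$ rather than $(cn^{1/k})^{k-1}$, so obtaining the stated clean constants $c^{k}$ and $c^{k-1}$ requires either absorbing $(1+o(1))$ factors into $c$ or assuming $n$ is sufficiently large in terms of $c$ and $k$. The girth-based injectivity of the first $k$ BFS layers is the only genuinely non-routine ingredient, and it is standard.
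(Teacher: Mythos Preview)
Your proposal is correct and follows essentially the same route as the paper: pass to a bipartite subgraph, prune low-degree vertices, then apply the Moore-type BFS bound from a maximum-degree vertex to control both $\Delta(G')$ and $v(G')$. The paper sidesteps your ``$-1$'' arithmetic issue by deleting vertices of degree \emph{at most} $cn^{1/k}$ (so $\delta(G')\ge cn^{1/k}+1$ and each non-root BFS vertex contributes at least $cn^{1/k}$ new children, giving the clean constants directly), and it obtains $v(G')\ge c^k n$ straight from the minimum-degree BFS without your edge-count detour.
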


\begin{proof}
A maximum cut of $G$ gives a bipartite subgraph with at least $cn^{1+1/k}$ edges. A subgraph $G'$ of this bipartite subgraph of minimum degree at least $cn^{1/k} + 1$ may be obtained by repeatedly removing vertices of degree at most $cn^{1/k}$. Let $\Delta:= \Delta(G')$ be the maximum degree of $G'$, and let $v$ be a vertex of
maximum degree, then the number of vertices at distance $k$ from $v$ is at least $\Delta c^{k-1}n^{(k - 1)/k}$,
since $G$ has girth larger than $2k$. In particular, $\Delta c^{k-1}n^{(k - 1)/k}\le n$ and so $\Delta \le n^{1/k}/c^{k-1}$.
The number of vertices in $G'$ is at least $c^k n$, since $G'$ has minimum degree at least $cn^{1/k} + 1$ and girth larger than $2k$.
\end{proof}

Let $r\ge2$, a {\em star} with vertex set $V$ is an $r$-graph on $V$ consisting of all edges containing a fixed vertex of $V$, i.e., the edge set of a star is $\{e\subset V: |e|=r, v\in e\}$ for some vertex $v\in V$. Let integers $d\ge m$ and let $S_{d,m}$ be a $d$-vertex $r$-graph consisting of $m$ vertex-disjoint stars of size $\lfloor d/m\rfloor$ or $\lceil d/m\rceil$.
\begin{lem}\label{prob}
Let integer $r\ge2$, and let integers $d\ge m$.The probability that a uniformly chosen set of $s$ vertices of $S_{d,m}$ is independent is at most 
$$
\exp\l(-\frac{m(s-rm)}{2d}\r).
$$
\end{lem}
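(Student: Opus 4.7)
The plan is to let $I$ be a uniformly chosen $s$-subset of $V(S_{d,m})$ and study the event that $I$ is independent by conditioning on the number of vertices $I$ places in each star. Write the stars as $\sigma_1,\ldots,\sigma_m$ with vertex sets $V_1,\ldots,V_m$, centers $v_1,\ldots,v_m$, and sizes $\ell_i = |V_i|\in\{\lfloor d/m\rfloor,\lceil d/m\rceil\}$. The key structural observation is that an edge of $\sigma_i$ is precisely an $r$-subset of $V_i$ containing $v_i$, so $I$ is independent in $S_{d,m}$ if and only if for every $i$, either $|I\cap V_i|\le r-1$ or $v_i\notin I$.

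Next I would condition on the profile $X=(X_1,\ldots,X_m)$ defined by $X_i=|I\cap V_i|$. Given $X$, the $m$ intersections $I\cap V_i$ are mutually independent and each is a uniformly chosen $X_i$-subset of $V_i$, so $\P(v_i\notin I\mid X_i)=(\ell_i-X_i)/\ell_i$. Combined with the characterization above,
\[
\P(I\text{ independent}\mid X) \;=\; \prod_{i:\,X_i\ge r}\frac{\ell_i-X_i}{\ell_i} \;\le\; \exp\l(-\sum_{i:\,X_i\ge r}\frac{X_i}{\ell_i}\r),
\]
using $1-y\le e^{-y}$.

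The remaining work is to lower bound the exponent using the two quantitative features of $S_{d,m}$. Because the stars are nearly balanced and $d\ge m$, we have $\ell_i\le\lceil d/m\rceil\le 2d/m$, which gives $X_i/\ell_i\ge mX_i/(2d)$. Because at most $m$ indices have $X_i\le r-1$, a one-line pigeonhole gives $\sum_{i:\,X_i\le r-1}X_i\le (r-1)m$, hence $\sum_{i:\,X_i\ge r}X_i\ge s-(r-1)m\ge s-rm$. Substituting both estimates yields
\[
\P(I\text{ independent}\mid X)\;\le\;\exp\l(-\frac{m(s-rm)}{2d}\r),
\]
and taking expectation over $X$ preserves the bound (and trivially, the bound exceeds $1$ when $s\le rm$).

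Since the argument is short and essentially linear, there is no substantive obstacle: the only place where one can err is the initial characterization of the independent sets of a star, which must distinguish the two regimes ``contains the center and is small'' versus ``misses the center''. Once this is correctly stated, the conditioning decouples the $m$ stars and the rest is routine.
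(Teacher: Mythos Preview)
Your proof is correct and follows the same approach as the paper: condition on the profile $(|I\cap V_1|,\ldots,|I\cap V_m|)$, use that the conditional distributions in the parts are independent uniform subsets, and bound the probability that each part misses the center via $\ell_i\le\lceil d/m\rceil\le 2d/m$ and $1-y\le e^{-y}$. The only cosmetic difference is that the paper absorbs the small-$X_i$ indices by writing the uniform bound $1-m(s_i-r)/(2d)$ for all $i$, whereas you separate the indices with $X_i\ge r$ and then recover the $-rm$ term via $\sum_{i:X_i\le r-1}X_i\le (r-1)m$; both lead to the same exponent.
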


\begin{proof}
Let the vertex sets of these stars be $V_1,V_2,\dots,V_m$. The probability that a uniformly chosen set of $s_i$ vertices in $V_i$ is independent in $S_{d,m}$ is at most $1-s_i/\lceil d/m\rceil\le 1-ms_i/2d$ if $s_i\ge r$, and is $1$ if $s_i<r$. Hence, this probability is at most $1-m(s_i-r)/2d$ for $0\le s_i\le d$. Therefore a uniformly chosen set $I\subset S_{d,m}$ of $s$ vertices with $|I\cap V_i|=s_i$ is independent with probability at most
$$
\prod_{i=1}^m\l(1-\frac{m(s_i-r)}{2d}\r)\le \exp\l(-\sum_{i=1}^m\frac{m(s_i-r)}{2d}\r)=\exp\l(-\frac{m(s-rm)}{2d}\r).
$$
\end{proof}

Now we are ready to prove Theorem~\ref{thm:rbc}.

\begin{proof}[Proof of Theorem~\ref{thm:rbc}]
It suffices to show that for $n$ large enough, there exists an $n$-vertex $\mathcal{B}^r_k$-free $r$-graph with independence number $O(n^{1-\frac{1}{k}}\log n)$. Let $G$ be an $n$-vertex graph of girth more than $2k$ with $2cn^{1+1/k}$ edges for some positive constant $c$. By Lemma~\ref{deg}, there exists a bipartite subgraph $G'$ of $G$ with at least $N=c^k n$ vertices, minimum degree at least $cn^{1/k}$ and maximum degree at most $n^{1/k}/c^{k-1}$. Let $X,Y$ be the parts of this bipartite graph where $|Y|\ge|X|$. Let $m=8\log n/c^{k}$. We form an $r$-graph $H$ with vertex set $Y$ by placing a random copy of $S_{d(x),m}$ on the vertex set $N_{G'}(x)$, the neighborhood of $x$ in $G'$, independently for each $x\in X$. Since $G'$ has girth more than $2k$, it is straightforward to check that $H$ does not contain any non-trivial Berge $k$-cycle. We now compute the expected number of independent sets of size $t=rmn^{1-1/k}/c^{k+1}$ in $H$. Clearly, $\log t\ge (1-1/k)\log n$. If $H$ has no independent set of size $t$ with positive probability, then since $v(H)\ge N/2$, we find that
\[ R(t,\mathcal{B}^r_k)\ge N/2 \ge \frac{c^k}{2}\l(\frac{c^{2k+1}t}{8r\log n}\r)^{\frac{k}{k-1}}\ge c_{k,r}\l(\frac{t}{\log t}\r)^{\frac{k}{k-1}},\]
for some positive constant $c_{k,r}$. This is enough to prove Theorem \ref{thm:rbc}.

\medskip

For an independent $t$-set $I$ in $H$, $I\cap N_{G'}(x)$ is an independent set in $S_{d(x),m}$ for all $x\in X$. Since these events are independent, setting $s(x)=|I\cap N_{G'}(x)|$, and applying Lemma~\ref{prob} gives:
\begin{equation*}
    \P(\t{$I$ independent in $H$})\le\prod_{x\in X}\exp\l(-\frac{m(s(x)-rm)}{2d(x)}\r)=\exp\l(-\sum_{x\in X}\frac{ms(x)}{2d(x)}+\sum_{x\in X}\frac{rm^2}{2d(x)}\r).
\end{equation*}
For every $x\in X$, $cn^{1/k}\le d(x)\le n^{1/k}/c^{k-1}$ and therefore
\begin{equation*}
    \P(\t{$I$ independent in $H$})\le\exp\l(-\frac{c^{k-1}m\sum_{x\in X}s(x)}{2n^{1/k}}+\frac{|X|rm^2}{2cn^{1/k}}\r).
\end{equation*}
Now $\sum_{x\in X}s(x)$ is precisely the number of edges of $G'$ between $X$ and $I$. Since every vertex in $I$ has degree at least $cn^{1/k}$, this number of edges is at least $cn^{1/k}t=rmn/c^{k}$. Consequently, using $|X|<n/2$,
\begin{equation*}
    \P(\t{$I$ independent in $H$})\le\exp\l(-\frac{c^{k}mt}{2}+\frac{c^{k}mt}{4}\r)=\exp\l(-\frac{c^{k}mt}{4}\r).
\end{equation*}
The expected number of independent sets of size $t$ is at most
\begin{equation*}
    \binom{n}{t}\exp\l(-\frac{c^{k}mt}{4}\r)<\exp\l(t\log n-\frac{c^{k}mt}{4}\r)=\exp\l(-t\log n\r).
\end{equation*}
This is vanishing as $n\rightarrow\infty$, and the proof of Theorem~\ref{thm:rbc} is complete.
\end{proof}

\section{Proof of Theorem~\ref{thm:4bc}}\label{section:4bc}

Lazebnik and the second author~\cite{LV} showed that there exist $n$-vertex $B_4$-free 3-graphs with $(1/6+o(1))n^{3/2}$ triples. More specifically, for $n$ large enough, there exists a linear $n$-vertex $B_4$-free $3$-graphs $J_n$ with $n^{3/2}/10$ triples and maximum degree at most $n^{1/2}$. We want to find an upper bound for the probability that a random $s$-set is independent in $J_n$. We make use of the following lemma, where we make no effort to optimize the constants.

\begin{lem}\label{lem:4bc random set}
Let $n$, $s$ be integers such that $s<\sqrt{n}/2$. For $n$ large enough, the probability that a uniformly chosen set of $s$ vertices of $J_n$ is independent is at most
$$
\exp\l(-\frac{s^3-216}{80n^{3/2}}\r).
$$
When $s\ge \sqrt{n}/2$, the probability is at most $639/640$.
\end{lem}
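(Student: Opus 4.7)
The approach is to apply Janson's inequality to the random variable $X$ counting triples of $J_n$ contained in the uniformly chosen $s$-subset $S \subseteq V(J_n)$. Since the target bound $\exp(-(s^3-216)/(80n^{3/2}))$ is at least $1$ whenever $s \le 6$ (with $216 = 6^3$ chosen precisely to make this true), the lemma is trivial in that range, so I may assume $s \ge 6$ from the outset.

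The central computation is to estimate the Janson parameters $\mu := \mathbb{E}[X]$ and $\Delta := \sum \mathbb{P}[e \cup f \subset S]$, where the sum is over ordered pairs $(e,f)$ of distinct triples of $J_n$ sharing at least one vertex. For the first moment, using $|J_n| = n^{3/2}/10$ together with $s(s-1)(s-2) \ge s^3/2$ for $s \ge 6$ gives $\mu \ge s^3/(20n^{3/2})$. For the dependency term, the two key structural features of $J_n$ enter: linearity forces intersecting distinct triples to share exactly one vertex, so $|e \cup f| = 5$ and $\mathbb{P}[e \cup f \subset S] = O(s^5/n^5)$; and the degree bound $\Delta(J_n) \le \sqrt{n}$ combined with $\sum_v d(v) = 3|J_n|$ yields $\sum_v d(v)^2 = O(n^2)$, hence $\Delta = O(s^5/n^3)$. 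Dividing gives $\Delta/\mu = O(s^2/n^{3/2}) = O(n^{-1/2})$ whenever $s < \sqrt{n}/2$, so in particular $\Delta \le \mu$ for $n$ large.

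For $s < \sqrt{n}/2$, Janson's inequality applied to uniform fixed-size subsets then gives $\mathbb{P}[X = 0] \le \exp(-\mu^2/(2(\mu+\Delta))) \le \exp(-\mu/4) \le \exp(-s^3/(80n^{3/2}))$, which is at most $\exp(-(s^3-216)/(80n^{3/2}))$ because $216 \ge 0$. For $s \ge \sqrt{n}/2$, I would invoke the monotonicity observation that $\mathbb{P}[S \text{ independent}]$ is non-increasing in $s$: a uniformly chosen $(s+1)$-subset $T$ restricts to a uniform $s$-subset $S \subset T$, and independence of $T$ implies independence of $S$. It therefore suffices to check the bound at $s_0 = \lceil \sqrt{n}/2 \rceil$, where the previous estimate yields $\mu(s_0) \to 1/80$ as $n \to \infty$ and Janson gives $\mathbb{P}[S \text{ indep}] \le \exp(-\mu(s_0)/4)$, which is at most $\exp(-1/320) < 639/640$ for $n$ large.

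The main obstacle I foresee is not any individual estimate but rigorously justifying Janson's inequality in the uniform fixed-size sampling model, since textbook statements are typically formulated for the binomial $G(n,p)$ model. This can be handled either by citing a uniform-model version of Janson (for instance from the Janson--{\L}uczak--Ruci\'nski monograph), or by a short conversion argument: apply Janson to a binomial subset with $p = s/n$ and then condition on the size being exactly $s$, absorbing the resulting $\Theta(\sqrt{s})$ loss into constants.
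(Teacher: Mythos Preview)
Your approach via Janson's inequality is correct in structure and closely parallels the paper's proof: both compute essentially the same first moment $\mu$ and pairwise-overlap term, exploit linearity of $J_n$ to force $|e\cup f|=5$ for distinct intersecting triples, and use the degree bound $\Delta(J_n)\le\sqrt n$ to control the second sum. The paper, however, takes the more elementary route of the two-term Bonferroni inequality
\[
\P[S\text{ not independent}]\ \ge\ \sum_{e\in E(J_n)}\P[A_e]\ -\!\!\sum_{\{e,f\}\subset E(J_n)}\!\!\P[A_e\wedge A_f],
\]
which is valid in \emph{any} probability space and therefore applies directly to the uniform random $s$-subset model with no further justification. From this one obtains $\P[S\text{ indep}]\le 1-s^3/(80n^{3/2})\le\exp(-s^3/(80n^{3/2}))$; for $s\ge\sqrt n/2$ the paper, like you, invokes monotonicity in $s$ and evaluates at $s=\sqrt n/2$ to get $639/640$.

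The obstacle you flag is genuine, and your proposed conversion does not quite close it: passing from the binomial to the uniform model by conditioning on $|S_p|=s$ (or on $|S_p|\le s$) costs at least a constant multiplicative factor, and since $\mu=\Theta(s^3/n^{3/2})\to 0$ throughout the range $s=o(\sqrt n)$, that loss cannot be ``absorbed into the constants'' in the exponent without degrading the $80$. Citing a uniform-model Janson inequality would repair this, but the simplest fix is to replace Janson by Bonferroni: you have already done the relevant first- and second-moment computations, and Bonferroni delivers the conclusion with no independence hypothesis. That is exactly the paper's argument.
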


\begin{proof}
This is trivial when $s<6$. When $6<s<\sqrt{n}/2$, let $X$ be the uniformly chosen $s$-set. For any edge $e\in E(J_n)$, let $A_e$ be the event that $e\in X$. Then by inclusion-exclusion principle, for $n$ large enough, the probability that $X$ is not independent is at least
$$
\begin{aligned}
&\sum_{e\in E(J_n)}\P(A_e)-\sum_{\{e,f\}\subset E(J_n)}\P(A_e\wedge A_f)\\
\ge&\frac{1}{\binom{n}{s}}\l(\frac{n^{3/2}}{10}\binom{n-3}{s-3}-n\binom{n^{1/2}}{2}\binom{n-5}{s-5}- \binom{n^{3/2}/10}{2}\binom{n-6}{s-6}\r)\\
\ge&\frac{s^3}{40n^{3/2}}\l(1-\frac{4s^3}{n^{3/2}}\r)\\
\ge&\frac{s^3}{80n^{3/2}}.
\end{aligned}
$$
Therefore, for $s>6$ and $n$ large enough, the probability that $X$ is independent is at most
$$
1-\frac{s^3}{80n^{3/2}}\le \exp\l(-\frac{s^3}{80n^{3/2}}\r)<\exp\l(-\frac{s^3-216}{80n^{3/2}}\r).
$$
When $s\ge \sqrt{n}/2$, the probability is at most
$$
1-\frac{(\sqrt{n}/2)^3}{80n^{3/2}}=\frac{639}{640}.
$$
\end{proof}

Now we are ready to prove Theorem~\ref{thm:4bc}.

\begin{proof}[Proof of Theorem~\ref{thm:4bc}]
Let $G$ be an $n$-vertex graph of girth more than $8$ with $2c_1n^{5/4}$ edges for some positive constant $c_1$. By Lemma $4$, there exists a bipartite subgraph $G'$ of $G$ with at least $N=c_1^4n$ vertices, minimum degree at least $c_1n^{1/4}$ and maximum degree at most $n^{1/4}/c_1^3$. Let $X$, $Y$ be the parts of this bipartite graph where $|Y|\ge|X|$. We form a $3$-graph $H$ with vertex set $Y$ by placing a random copy of $J_{d(x)}$ on the vertex set $N_{G'}(x)$, the neighborhood of $x$ in $G$, independently for each $x\in X$. Since $G$ has girth more than $2k$, it is straightforward to check that $H$ does not contain any Berge $4$-cycle. Let $m=8c_1^{1/4}\sqrt{\log n}$, and let $t=mn^{13/16}$. Clearly, $\log t>13\log n/16$. If $H$ has no independent sets of size $t$ with positive probability, then since $v(H)\ge N/2$, we conclude that
$$
R(t,B_4)\ge N/2\ge \frac{c_1^4}{2}\l(\frac{t}{8c_1^{1/4}\sqrt{\log n}}\r)^{16/13}\ge c_2\l(\frac{t}{\sqrt{\log t}}\r)^{16/13},
$$
for some positive constant $c_2$. This is enough to prove Theorem~\ref{thm:4bc}.

Let $A$ be a $t$-set in $Y$, and let $X_A=\{x\in X||N_{G'}(x)\cap A|\ge\sqrt{t}/2\}$, $\overline{X}_A=X\backslash A$. We now evaluate the probability that $A$ is independent in $H$ in two cases.\\
\textbf{Case 1:} When $|X_A|<n^{5/6}$. Since the induced bipartite subgraph of $G'$ on $X_A\cup A$ has girth $8$, the number of edges of $G'$ between $X_A$ and $A$ is less than $(n^{5/6})^{5/4}=n^{25/24}$. If $A$ is independent in $H$, then $N_{G'}(x)\cap A$ is also independent in $J_{d(x)}$ for all $x\in X$. Since these events are independent, setting $s(x)=|N_{G'}(x)\cap A|$, and applying Lemma~\ref{lem:4bc random set} gives
$$
\begin{aligned}
\P(A\text{ independent in }H)&\le \prod_{x\in\overline{X}_A}\exp\l(-\frac{s(x)^3-216}{80d(x)^{3/2}}\r)\\
&=\exp\l(-\sum_{x\in\overline{X}_A}\frac{s(x)^3}{80d(x)^{3/2}}+\sum_{x\in\overline{X}_A}\frac{27}{10d(x)^{3/2}}\r).
\end{aligned}
$$
For every $x\in X$, $c_1n^{1/4}\le d(x)\le n^{1/4}/c_1^{3}$ and hence together with Jenson's inequality we have
$$
\begin{aligned}
\P(A\text{ independent in }H)&\le \exp\l(-\frac{c_1^{9/2}\sum_{x\in\overline{X}_A}s(x)^3}{80n^{3/8}}+\frac{27|\overline{X}_A|}{10c_1^{3/2}n^{3/8}}\r)\\
&\le\exp\l(-\frac{c_1^{9/2}(\sum_{x\in\overline{X}_A}s(x))^3}{80n^{3/8}|\overline{X}_A|^2}+\frac{27|\overline{X}_A|}{10c_1^{3/2}n^{3/8}}\r).
\end{aligned}
$$
Note that $\sum_{x\in\overline{X}_A}s(x)$ is exactly the number of edges of $G'$ between $\overline{X}_A$ and $A$, which is at least $tc_1n^{1/4}-n^{25/24}=(1-o(1))c_1mn^{17/16}$. Also note that $|\overline{X}_A|<N/2=c_1^4n/2$. Consequently,
$$
\begin{aligned}
\P(A\text{ independent in }H)&\le \exp\l(-\frac{(1-o(1))m^3n^{13/16}}{20c_1^{1/2}}+\frac{27c_1^{5/2}n^{5/8}}{20}\r)\\
&<\exp\l(-\frac{m^3n^{13/16}}{32c_1^{1/2}}\r).
\end{aligned}
$$
\textbf{Case 2:} When $|X_A|\ge n^{5/6}$. Applying Lemma 6 gives
$$
\P(A\text{ independent in }H)\le (639/640)^{|X_A|}\le \exp(-n^{5/6}/640)< \exp\l(-\frac{m^3n^{13/16}}{32c_1^{1/2}}\r).
$$
In both cases we have $\P(A\text{ independent in }H)< \exp\l(-\frac{m^3n^{13/16}}{32c_1^{1/2}}\r)$. Therefore the expected number of independent sets of size $t$ in $H$ is at most
$$
\binom{n}{t}\exp\l(-\frac{m^3n^{13/16}}{32c_1^{1/2}}\r)<\exp\l(mn^{13/16}\log n-\frac{m^3n^{13/16}}{32c_1^{1/2}}\r)=\exp\l(-mn^{13/16}\log n\r).
$$
This is vanishing as $n\rightarrow\infty$, which completes the proof of Theorem~\ref{thm:4bc}.

\end{proof}

\section{Degrees, codegrees and independent sets}

We make use of the following elementary lemma, \
whose proof is a standard probabilistic argument, included for completeness:

\begin{lem}\label{alphabound}
 Let $d \geq 1$, and let $H$ be a 3-graph of average degree at most $d$. Then
\[
 \alpha(H) \geq \frac{2v(H)}{3d^{\frac{1}{2}}}.
\]
 \end{lem}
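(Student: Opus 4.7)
The plan is to use the classical deletion method. Write $n = v(H)$. First I would sample a random subset $S \subseteq V(H)$ by including each vertex independently with probability $p$, where $p$ is to be chosen. Then $\mathbb{E}[|S|] = pn$, and since $\sum_{v} d_H(v) = 3|H| \le d n$, the number of triples of $H$ entirely inside $S$ has expectation at most $p^{3} |H| \le p^{3} d n / 3$.

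Next I would delete one vertex from each edge of $H[S]$ to obtain an independent set $I \subseteq S$. Linearity of expectation gives
\[
\mathbb{E}[|I|] \ge pn - \frac{p^{3} d n}{3},
\]
so there exists a choice of $S$ for which $\alpha(H) \ge pn - p^{3} d n / 3$.

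Now I would optimize in $p$. Differentiating in $p$ leads to the choice $p = d^{-1/2}$, which is at most $1$ thanks to the hypothesis $d \ge 1$. Substituting yields
\[
\alpha(H) \ge \frac{n}{d^{1/2}} - \frac{n}{3 d^{1/2}} = \frac{2 v(H)}{3 d^{1/2}},
\]
as required.

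There is really no obstacle here — the only point worth checking is that the optimal $p = d^{-1/2}$ is a legitimate probability, which is exactly what the assumption $d \ge 1$ guarantees; otherwise one would take $p = 1$ and the bound would be trivial since $\alpha(H) \ge n \ge 2n/(3 d^{1/2})$ anyway.
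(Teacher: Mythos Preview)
Your proof is correct and essentially identical to the paper's: both sample each vertex independently with probability $p=d^{-1/2}$, delete one vertex per surviving edge, and use linearity of expectation to obtain $\alpha(H)\ge pn - p^3 dn/3 = 2v(H)/(3d^{1/2})$. Your extra remark that the hypothesis $d\ge 1$ is exactly what makes $p\le 1$ is a nice clarification the paper leaves implicit.
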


\begin{proof} Let $X$ be a subset of $V(H)$ whose elements are chosen independently with probability $p = d^{-1/2}$.
We can get an independent set by deleting a vertex for each edge of $H$ contained in $X$. Then the expected size of such independent set is at least
 \[ pv(H) - p^3|H| = pv(H) - \frac{p^3 dv(H)}{3} = \frac{2v(H)}{3d^{\frac{1}{2}}}.\]
 Hence, there must exist an independent set of size at least the desired lower bound, which completes the proof.
 \end{proof}

\begin{lem}\label{dmaxlemma}
Let $H$ be a $3$-graph on $n$ vertices, and $0 < \epsilon < 1/2$. Then there exists an induced subgraph $G$ of $H$ satisfying the following properties:
\begin{enumerate}
    \item $v(G)\ge n^{1-\frac{2}{\log_2(\frac{1}{\epsilon})}}$,
    \item $\Delta(G)\le \frac{d(G)}{\epsilon}$.
\end{enumerate}
\end{lem}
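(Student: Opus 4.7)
The plan is to iteratively prune vertices of maximum degree and show the process cannot continue for too many steps. Formally, set $H_0:=H$; at step $i$, if $\Delta(H_i)\le d(H_i)/\epsilon$, output $G:=H_i$ and stop; otherwise delete a vertex $v_i$ of maximum degree in $H_i$ to form $H_{i+1}$. The claim will be that this process must halt before the vertex count drops below $n^{1-2/L}$, where $L:=\log_2(1/\epsilon)$.

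The main quantitative input is an upper bound on $|H_k|$. At each continuation step $d_{H_i}(v_i)=\Delta(H_i)>d(H_i)/\epsilon=3|H_i|/(\epsilon(n-i))$, so
\[
|H_{i+1}|<|H_i|\l(1-\frac{3}{\epsilon(n-i)}\r).
\]
Telescoping over $k$ continuation steps and applying $1-x\le e^{-x}$ together with $\sum_{i=0}^{k-1}1/(n-i)\ge\ln(n/(n-k))$ will yield
\[
|H_k|<|H|\l(\frac{n-k}{n}\r)^{3/\epsilon}\le n^{3}\l(\frac{n-k}{n}\r)^{3/\epsilon}.
\]

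To finish, suppose for contradiction that the process first halts at step $T$ with $v(H_T)<n^{1-2/L}$. Then the stopping condition failed at step $T-1$, so $|H_{T-1}|\ge 1$, while $(n-T+1)/n\le n^{-2/L}$ forces $((n-T+1)/n)^{3/\epsilon}\le n^{-6/(\epsilon L)}$. A direct check shows $\epsilon L=\epsilon\log_2(1/\epsilon)<1$ throughout $(0,1/2)$ (the function attains its maximum $(\log_2 e)/e\approx 0.531$ at $\epsilon=1/e$), hence $6/(\epsilon L)>6$, and the displayed bound becomes $|H_{T-1}|<n^{3-6}=n^{-3}<1$ for $n\ge 2$, contradicting $|H_{T-1}|\ge 1$.

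The step needing the most care is the verification that $\epsilon\log_2(1/\epsilon)<1$ on $(0,1/2)$; this inequality is exactly what makes the exponent of $n$ on the right-hand side negative and drives the contradiction, and it explains why the weak exponent $2/L$ is enough in the statement (the argument would in fact tolerate much more aggressive vertex deletion). Boundary cases where $2/L\ge 1$ (i.e.\ $\epsilon\ge 1/4$) are vacuous because any single vertex $G$ trivially satisfies $\Delta(G)=d(G)=0\le 0/\epsilon$.
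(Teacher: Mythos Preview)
Your single-vertex-deletion strategy is a legitimate alternative to the paper's proof, but the write-up contains a real error. The inequality you invoke,
\[
\sum_{i=0}^{k-1}\frac{1}{n-i}\ \ge\ \ln\frac{n}{n-k},
\]
goes the wrong way: the left side is $\sum_{j=n-k+1}^{n}1/j$, which is the \emph{right} Riemann sum for $\int_{n-k}^{n}\frac{dx}{x}$ and hence is at most $\ln(n/(n-k))$ (try $k=1$: $1/n<\ln\tfrac{n}{n-1}$). The valid lower bound is $\sum_{j=n-k+1}^{n}1/j\ge\ln\frac{n+1}{n-k+1}$, which shifts numerator and denominator by one. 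There is a second off-by-one: from $v(H_T)<n^{1-2/L}$ you only obtain $v(H_{T-1})=v(H_T)+1<n^{1-2/L}+1$, not $(n-T+1)/n\le n^{-2/L}$. These two shifts look innocuous, but since you raise the base to the power $3/\epsilon$, a constant factor in the base becomes a factor $2^{O(1/\epsilon)}$, which is \emph{not} absorbed by your $n^{-3}$. A correct route is to deduce from $1\le|H_{T-1}|<n^{3}\bigl((n-T+2)/(n+1)\bigr)^{3/\epsilon}$ that $v(H_T)>n^{1-\epsilon}-2$, and then use the stronger fact $\epsilon L\le(\log_2 e)/e<0.54$ (not merely $<1$) to check $n^{1-\epsilon}-2\ge n^{1-2/L}$; this still needs a short separate argument for small $n$.

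By contrast, the paper deletes vertices in \emph{rounds}: in each round it removes every vertex of degree at least the current average $d(G^{(i)})$, which kills at most $v(G^{(i)})/3$ vertices while forcing $\Delta(G^{(i+1)})<d(G^{(i)})$, hence $d(G^{(i+1)})<\epsilon\,d(G^{(i)})$ whenever one does not halt. Thus each round halves the vertex count and shrinks the average degree by a factor $\epsilon$, so after $K=2\log_{1/\epsilon}n$ rounds the graph is empty and $v(G)>n/2^{K}=n^{1-2/L}$. This sidesteps the harmonic-sum analysis entirely. Your approach is conceptually simpler (one deletion at a time) but demands tighter bookkeeping; the paper's round-based argument trades that for a two-line geometric count.
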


\begin{proof}
Let $H=G^{(0)}$. We do the following for $i\ge 0$. If $\Delta(G^{(i)})\le d(G^{(i)})/\epsilon$, we let $G=G^{(i)}$. Otherwise, iteratively delete vertices of $G^{(i)}$ with degree at least $d(G^{(i)})$. Each deleted vertex will result in the loss of at least $d(G^{(i)})$ edges. So we can delete at most
\begin{equation*}
    \frac{|G^{(i)}|}{d(G^{(i)})}=\frac{v(G^{(i)})\cdot d(G^{(i)})}{3\cdot d(G^{(i)})}=\frac{v(G^{(i)})}{3}<\frac{v(G^{(i)})}{2}
\end{equation*}

 vertices in this step. Let $G^{(i+1)}$ be the subgraph induced by the remaining vertices. Then we have $v(G^{(i+1)})>v(G^{(i)})/2$. If $\Delta(G^{(i+1)})\le d(G^{(i+1)})/\epsilon$, then we let $G=G^{(i+1)}$. Otherwise, we have
 \begin{equation*}
     d(G^{(i+1)})\leq \epsilon{\Delta(G^{(i+1)})}<\epsilon {d(G^{(i)})}.
 \end{equation*}
 Let $K=2\log_{1/\epsilon}n$. We must obtain an induced subgraph $G$ with $\Delta(G)\le d(G)/\epsilon$ after at most $K$ repetitions. Otherwise, after $K$ repetitions, since the average degree decreases by at least a factor of $\epsilon$ after each repetition, the remaining graph $G^{(K)}$ will have no edge, which satisfies the condition $\Delta(G^{(K)})\le d(G^{(K)})/\epsilon$. Suppose after $m\le K$ repetitions we have the desired induced subgraph $G$ with $\Delta(G)<d(G)/\epsilon$. Since the number of vertices decreases by at most a factor of $2$, we also have
 \begin{equation*}
     v(G)>\frac{n}{2^m}\ge n^{1-\frac{2}{\log_2(\frac{1}{\epsilon})}}.
 \end{equation*}
This completes the proof.
\end{proof}

We use the following slightly weaker version of a lemma due to M\'{e}roueh~\cite{M}; the lemma is in fact valid for $3$-graphs $H$ with no
loose $k$-cycles:

\begin{lem}\label{heavy}
Let $H$ be a $\mathcal{B}_k$-free $3$-graph. Then there exists a subgraph $H^*$ of $H$ such that $|H^*|>|H|/(3k^2)$ and each edge of $H^*$ contains a pair of codegree 1.
\end{lem}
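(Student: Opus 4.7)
I would prove the lemma via a natural iterative edge-deletion procedure. Starting from $H^{(0)}=H$, while the current $3$-graph $H^{(i)}$ contains an edge $e$ all three of whose pairs have codegree at least $2$ in $H^{(i)}$, delete $e$ to form $H^{(i+1)}$. The procedure terminates at some $m$, and the resulting hypergraph $H^{*}:=H^{(m)}$ has, by construction, the property that every one of its edges contains a pair of codegree $1$. The entire task is then to lower-bound $|H^{*}|=|H|-m$.

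The first observation is that codegrees are monotonically nonincreasing along the sequence $H^{(0)}\supseteq H^{(1)}\supseteq\cdots$. Hence any edge that is ever deleted already had all three of its pairs of codegree $\geq 2$ in the original $H$; call such an edge \emph{heavy}. Conversely, if $e\in H$ possesses a pair $p\subset e$ with $d_H(p)=1$, then throughout the procedure the unique edge containing $p$ is $e$ itself, so $p$ remains of codegree exactly $1$ in every $H^{(i)}$ that still contains $e$; such an $e$ is therefore never deletable. It follows that $H^{*}$ contains every non-heavy edge of $H$, and so it suffices to prove that $H$ has more than $|H|/(3k^{2})$ non-heavy edges.

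I would establish this by contradiction: assume at least $(1-1/(3k^{2}))|H|$ edges of $H$ are heavy, and build a non-trivial Berge $k$-cycle in $H$, contradicting $\mathcal{B}_k$-freeness. The natural auxiliary object is the graph $G$ on $V(H)$ whose edges are the pairs $\{u,v\}$ with $d_H(u,v)\geq 2$; each heavy edge $\{u,v,w\}$ contributes the triangle $uvw$ to $G$, so $G$ is rich in triangles. I would then argue that, after passing to an appropriate subgraph, $G$ must contain a cycle of length $k$, say $v_1 v_2 \cdots v_k v_1$, and lift this cycle to a Berge $k$-cycle in $H$ by choosing, for each $i$, an edge $g_i\in H$ with $\{v_i,v_{i+1}\}\subset g_i$. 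The SDR for consecutive intersections is automatic using the distinct representatives $v_{i+1}\in g_i\cap g_{i+1}$, while the two or more choices for each $g_i$ (guaranteed by the codegree-$\geq 2$ condition on $\{v_i,v_{i+1}\}$) afford the flexibility needed to ensure $\bigcap_i g_i=\emptyset$, i.e.\ non-triviality.

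The main obstacle is precisely this last step. Extracting a bona fide $C_k$ from an abundance of triangles in $G$ requires a careful localization argument, and the lift must avoid ``sunflower-like'' obstructions in which every choice of $g_i$ is forced through a common vertex. The constant $1/(3k^{2})$ presumably accounts for the factor $3$ (pairs per triple) together with a quadratic-in-$k$ cost from identifying the cycle within $G$ and performing the lift; I would expect the bookkeeping to reduce to a counting inequality comparing the number of heavy edges to the total number of $k$-cycles produced by the lifting procedure.
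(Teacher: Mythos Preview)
Your reduction to counting non-heavy edges of the \emph{original} $H$ is too lossy, and in fact provably false. Take $H$ to be a disjoint union of copies of $K_4^{(3)}$ (the complete $3$-graph on four vertices). For $k\geq 5$ this $H$ is $\mathcal{B}_k$-free, since each component has only four edges. But every pair in $K_4^{(3)}$ has codegree exactly $2$, so \emph{every} edge of $H$ is heavy and the number of non-heavy edges is zero, not $>|H|/(3k^{2})$. Note that the lemma itself holds here---your own deletion process removes one edge per $K_4^{(3)}$ and leaves three, so $|H^{*}|=3|H|/4$---but the intermediate claim you set out to prove does not. The inequality $|H^{*}|\geq\#\{\text{non-heavy edges of }H\}$ is correct but far too weak: the bulk of $H^{*}$ can consist of edges that were heavy in $H$ and acquired a codegree-$1$ pair only after other deletions. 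For the same reason, your subsequent plan (find a $C_k$ in the codegree-$\geq 2$ graph $G$ and lift it) cannot succeed in general: in the example $G$ is a disjoint union of $K_4$'s, which contains no $C_k$ for $k\geq 5$.

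The paper sidesteps this by working with a coarser notion---pairs of codegree $<k$ rather than $=1$---and a different iteration: peel off \emph{all} edges containing such a light pair, and repeat on what remains. The key point is that after $k-1$ rounds nothing can survive, because any surviving edge would seed a tight path of length $k$, which is itself a non-trivial Berge $k$-cycle. Pigeonhole then yields a layer $H'$ with $|H'|>|H|/k$ in which every edge contains a pair that is light \emph{in $H'$}; a final greedy thinning (the auxiliary graph on $E(H')$ whose edges record a shared light pair has maximum degree at most $3(k-2)$) reduces ``codegree $<k$'' to ``codegree $1$'' at a further cost of roughly $3k$. Your triangle-rich graph $G$ and the search for a $C_k$ inside it are not needed; the structure that forces the contradiction is a tight $k$-path, and it emerges directly from the layering.
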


\begin{proof} Given a $3$-graph $G$ and a pair of vertices ${x,y}$, we say that $\{x,y\}$ is {\em $G$-light} if $d_G(x,y)< k$. Let $G_1 = H$, and let $H_1$ consist of all edges of $G_1$ containing a $G_1$-light pair, and let $G_2=G_1\backslash H_1$. For $i \geq 2$, let $H_i$ consist
of all edges of $G_{i}$ containing a $G_{i}$-light pair, and let $G_{i+1} = G_{i} \backslash H_i$. Suppose for contradiction that
$G_{k}$ is not empty. Let $e_1=\{v_1,v_2,v_3\}$ be an edge in $G_k$, then by definition, $\{v_2,v_3\}$ is not a $G_{k-1}$-light pair, and hence, there exists an edge $e_2=\{v_2,v_3,v_4\}$ such that $v_4\not=v_1$. For $2\le i\le k-1$, let $e_i=\{v_i,v_{i+1},v_{i+2}\}$ be an edge in $G_{k+1-i}$. By definition, $\{v_{i+1},v_{i+2}\}$ is not a $G_{k-i}$-light pair, and hence, there exists an edge $e_{i+1}=\{v_{i+1},v_{i+2},v_{i+3}\}$ in $G_{k-i}$ such that $v_{i+3}$ is distinct from all $v_j$, $1\le j\le i$. Therefore, we have a tight path of length $k$ in $G_{1}=H$, that is, a hypergraph consisting of $k+2$ distinct vertices $v_i$, $1\le i\le k+2$, and $k$ edges $e_i=\{v_i,v_{i+1},v_{i+2}\}$, $1\le i\le k$. This is also a non-trivial Berge k-cycle. Indeed, when $k$ is even, $\{v_2, v_4, \dots, v_k, v_{k+1}, v_{k-1}, \dots, v_3\} $ forms a system of distinct representatives of $\{e_1\cap e_2, e_2\cap e_4, e_4\cap e_6,\dots, e_{k-2}\cap e_{k}, e_{k}\cap e_{k-1}, e_{k-1}\cap e_{k-3},\dots, e_{3}\cap e_1 \}$, and when $k$ is odd, $\{v_2, v_4, \dots, v_{k+1}, v_{k}, v_{k-2}, \dots, v_3\} $ forms a system of distinct representatives of $\{e_1\cap e_2, e_2\cap e_4, e_4\cap e_6,\dots, e_{k-3}\cap e_{k-1}, e_{k-1}\cap e_{k}, e_{k}\cap e_{k-2},\dots, e_{3}\cap e_1 \}$. This results in a contradiction, since $H$ is $\mathcal{B}_k$-free. Therefore, $G_k$ must be empty, and hence $H$ can be partitioned into $k-1$ subgraphs $H_i$, $1\le i\le k-1$, such that each $H_i$ consists of edges containing a $G_i$-light pair, which is also $H_i$-light. Let $H'$ be a subgraph $H_i$ with the most edges, then by the pigeonhole principle,
\begin{equation*}
    |H'|>\frac{|H|}{k}.
\end{equation*}
Now consider a graph $J$ whose vertex set is the set of $3$-edges of $H'$, and two $3$-edges of $H'$ form an edge of $J$ if they share an $H'$-light pair. It is easy to see that $J$ has maximum degree at most $3k-6$. Then we can greedily take an independent set of $J$ of size at least $v(J)/(3k-5)$, and this independent set correspond to a subgraph $H^*$ of $H'$ such that
\begin{equation*}
    |H^*|>\frac{|H'|}{3k-5}>\frac{|H|}{3k^2},
\end{equation*}
and each edge of $H^*$ contains a pair of codegree 1.
\end{proof}

\section{Proof of Theorem~\ref{thm:nbc}}\label{section:nbc}

A key ingredient of the proof of Theorem \ref{thm:nbc} is a supersaturation theorem for cycles in graphs: we make use of the following result proved by Simonovits~\cite{FS}
(see Morris and Saxton~\cite{MS} for stronger supersaturation):

\begin{lem}\label{FS}
For every $n,k\ge2$, there exist constants $\gamma,b_0>0$ such that for every $b\ge b_0$, any $n$-vertex graph $G$ with at  least $bn^{1+1/k}$ edges contains at least $\gamma b^{2k}n^2$ copies of $C_{2k}$.
\end{lem}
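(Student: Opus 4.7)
The plan is to prove this classical supersaturation lemma in three steps: (i) a min-degree cleanup, (ii) a Cauchy-Schwarz lower bound on closed walks of length $2k$ in the cleaned graph, and (iii) a matching upper bound on the degenerate (non-cycle) closed walks, leaving the bulk as genuine $C_{2k}$ copies.

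First, I would pass to a subgraph $G'$ of $G$ with $\delta(G') \geq bn^{1/k}/2$ by iteratively deleting vertices of degree at most $bn^{1/k}/2$; this destroys at most $bn^{1+1/k}/2$ edges, leaving the bulk of $e(G)$ intact. To control the degenerate count in step (iii) it is convenient to also have a matching upper bound on $\Delta(G')$, which I would arrange by restricting further to a vertex class where degrees lie in a fixed dyadic window, giving $\Delta(G') = O(\delta(G')) = O(bn^{1/k})$ at the cost of at most a $\log n$ factor in the edge count that can be absorbed into $b_0$.

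Second, I would count closed walks. Writing $f(x,y)$ for the number of walks of length $k$ from $x$ to $y$ in $G'$, the minimum-degree bound gives $\sum_y f(x,y) \geq \delta(G')^k$ for every $x$, so Cauchy-Schwarz applied on the pairs $(x,y)$ yields
\[
\sum_{x,y} f(x,y)^2 \;\geq\; \frac{\bigl(\sum_{x,y} f(x,y)\bigr)^2}{v(G')^2} \;\geq\; \delta(G')^{2k} \;\geq\; (b/2)^{2k}\, n^{2}.
\]
The left-hand side is exactly $\mathrm{tr}(A_{G'}^{2k})$, the number of closed walks of length $2k$ in $G'$, so $G'$ has at least $(b/2)^{2k} n^2$ such walks.

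Third, I would split these walks into the $4k$ walks per labeled $C_{2k}$ copy and a degenerate remainder. Any closed walk of length $2k$ that is not a $C_{2k}$ factors as a homomorphism through a proper quotient pseudograph of $C_{2k}$ on at most $2k-1$ vertices; there are only finitely many such quotients, and for each one a standard homomorphism count using $\Delta(G') = O(bn^{1/k})$ shows that its contribution to the total is lower order than $(b/2)^{2k} n^2$ once $b \geq b_0(k)$ is large enough and $n$ is large enough. Subtracting this degenerate part and dividing by $4k$ then yields $\#C_{2k}(G) \geq \gamma b^{2k} n^2$ for a suitable $\gamma = \gamma(k) > 0$.

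The main obstacle is the third step: the degenerate-walk bound is the technical core of the argument and is what forces both the hypothesis $b \geq b_0$ and the qualitative ``$n$ large enough'' built into the lemma. The dyadic cleanup in step (i) and a careful enumeration of quotients of $C_{2k}$ in step (iii) are the price to pay; alternatively, this entire technical piece can be short-circuited by quoting the stronger supersaturation of Morris and Saxton cited in the excerpt just before the lemma.
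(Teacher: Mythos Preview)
The paper does not prove this lemma; it is quoted from Faudree--Simonovits (with a pointer to Morris--Saxton for a sharper version), so there is no in-paper argument to compare against.

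That said, your outline has a real gap in step~(iii), and it is not merely a matter of bookkeeping. With $\Delta(G')=O(bn^{1/k})$, the only general bound you have for homomorphisms from a connected quotient $Q$ of $C_{2k}$ on $v\le 2k-1$ vertices is $n\Delta^{\,v-1}=O\bigl(b^{2k-2}n^{3-2/k}\bigr)$. For $k=2$ this is $O(b^{2}n^{2})$ and is indeed beaten by $(b/2)^{4}n^{2}$ once $b$ is a large constant, so your scheme works there. But for every $k\ge 3$ one has $3-2/k>2$, and when $b$ is held fixed the degenerate count swamps the Cauchy--Schwarz lower bound $(b/2)^{2k}n^{2}$ from step~(ii). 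Concretely, let $G$ be a disjoint union of $n/m$ cliques $K_{m}$ with $m=\Theta(bn^{1/k})$: then $e(G)=\Theta(bn^{1+1/k})$, the graph is already regular so your cleanup does nothing, and the number of degenerate closed $2k$-walks is $\Theta(nm^{2k-2})=\Theta(b^{2k-2}n^{3-2/k})$, which for $k\ge 3$ and large $n$ exceeds $(b/2)^{2k}n^{2}$; your subtraction yields nothing. The underlying issue is that step~(ii) only certifies $\mathrm{tr}(A^{2k})\ge d^{2k}$, which can be far below the true trace, while the degenerate walks can sit far above $d^{2k}$; the two slacks are not comparable. The known proofs do not go through $\mathrm{tr}(A^{2k})$ at all: they count genuine \emph{paths} of length $k$ via a BFS/layering argument in the Bondy--Simonovits style and pair them into cycles directly, which is precisely what sidesteps the degenerate-walk problem.

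A smaller issue sits in step~(i). Absorbing a $\log n$ loss into $b_{0}$ forces $b_{0}$ to depend on $n$, which is incompatible with how the paper uses the lemma (the case split $b\gtrless 1/\epsilon$ with $\epsilon=e^{-\sqrt{\log n}}$ requires $b_{0}$ to be a constant depending only on $k$). Moreover, passing to the vertices of a single dyadic degree class does not control degrees in the \emph{induced} subgraph, so you may lose the minimum-degree bound established in the first half of~(i). Your closing remark is the right one in context: quoting the cited result is exactly what the paper does.
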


We next give a simple lemma which says that if a graph has many cycles of length $2k$ containing a fixed edge, then it has many edges.

\begin{lem}\label{bigcpn}
Let $G$ be a graph containing $m$ cycles of length $2k$, each containing an edge $e \in G$. Then
$|G| \geq m^{1/(k - 1)}/2$.
\end{lem}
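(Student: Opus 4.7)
My plan is to bound $m$, the number of $2k$-cycles through the fixed edge $e=uv$, from above in terms of $|G|$ by producing an efficient encoding of such a cycle by a short tuple of edges.

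First, I would observe that deleting $e$ from any such $C_{2k}$ leaves a path of length $2k-1$ from $u$ to $v$ whose internal edges all lie in $G\setminus\{e\}$, so it suffices to count such $(u,v)$-paths. Write such a path as $u=x_0,x_1,x_2,\dots,x_{2k-1}=v$ with the $x_i$ distinct, and associate to it the tuple
\[ \Phi(P) = \bigl(\{x_1,x_2\},\{x_3,x_4\},\dots,\{x_{2k-3},x_{2k-2}\}\bigr), \]
a sequence of $k-1$ edges of $G$ (each $\{x_{2i-1},x_{2i}\}$ is an edge of $P$, hence of $G$). Thus $\Phi$ maps our set of $m$ paths into $E(G)^{k-1}$.

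Next, I would bound the fibers of $\Phi$. Given a tuple in the image, reconstructing the path requires only choosing, for each of the $k-1$ unordered pairs $\{x_{2i-1},x_{2i}\}$, which vertex is $x_{2i-1}$ and which is $x_{2i}$; the endpoints $u,v$ are already fixed. Hence each tuple is the image of at most $2^{k-1}$ paths, so
\[ m \;\le\; 2^{k-1}\,|G|^{k-1}. \]
Rearranging yields $|G|\ge m^{1/(k-1)}/2$, as required.

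There is no real obstacle here: the only thing to check carefully is that $\Phi$ really does land in $E(G)^{k-1}$ (the pairs $\{x_{2i-1},x_{2i}\}$ are consecutive vertices on the path, so they are edges of $G$) and that the reconstruction count $2^{k-1}$ is correct (each pair has exactly two possible orderings, and the orderings at different pairs are independent). No supersaturation or extremal input is needed, so the lemma reduces to this one-line encoding estimate.
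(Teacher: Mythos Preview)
Your proof is correct and is essentially the same argument as the paper's: both encode a $2k$-cycle through $e$ by the $k-1$ edges, other than $e$, of the perfect matching of the cycle that contains $e$, and then bound the number of cycles yielding a given encoding by $2^{k-1}$ (the choices of orientation of each matching edge). The only cosmetic difference is that you record these $k-1$ edges as an \emph{ordered} tuple in $E(G)^{k-1}$, whereas the paper records them as an unordered $(k-1)$-subset of $E(G)\setminus\{e\}$ and picks up an extra $(k-1)!$ in the fiber count that cancels against the $(k-1)!$ in $\binom{|G|-1}{k-1}$; both routes give $m\le 2^{k-1}|G|^{k-1}$.
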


\begin{proof}
For each cycle $C$ of length $2k$ containing $e$, let $M(C)$ be the perfect matching of $C$ containing $e$. Fixing a matching $M \subset G$ of size $k$ containing $e$, at most $(k - 1)!2^{k-1}$ cycles $C$ have $M(C) = M$. It follows that the number of distinct matchings $M \subset G$ of size $k$ containing $e$
is at least $m/(k - 1)!2^{k - 1}$, and therefore
\begin{equation*}
    {|G| - 1 \choose k - 1} \ge \frac{m}{(k  - 1)!2^{k-1}}.
\end{equation*}
We conclude $|G|^{k - 1} \geq m/2^{k - 1}$ and therefore $|G|\ge m^{1/(k-1)}/2$.
\end{proof}

Now we are ready to prove Theorem~\ref{thm:nbc}.
\begin{proof}[Proof of Theorem~\ref{thm:nbc}]
It suffices to show that for every large enough integer $n$, an $n$-vertex $\mathcal{B}_{2k}$-free 3-graph $H$ contains an independent set of size at least $n^{{(2k-1)}/{(2k)}-5/(2\sqrt{\log n})}$. By Lemma~\ref{dmaxlemma} with $\epsilon=\exp{(-\sqrt{\log_2 n})}$, we find an induced subgraph $H_0$ of $H$ with $n_0$ vertices, average degree $d_0$ and maximum degree $D_0$ such that $n_0\ge n^{1-2/\sqrt{\log_2 n}}$ and $D_0 < d_0/\epsilon$. By Lemma~\ref{heavy},
there is a subgraph $H_1$ of $H_0$ with at least $|H_0|/(4k^2)$ edges such that each edge of $H_1$ contains a pair of codegree 1 in $H_1$. Let $\chi : V(H_1) \rightarrow \{1,2,3\}$ be a random 3-coloring and let $H_2$ consist of all triples in $H_1$ such that the pair of vertices of colors 1 and 2 has codegree 1 in $H_1$ and the last
vertex in the triple has color 3. The probability that an edge in $H_1$ is also an edge in $H_2$ is at least $1/27$, and therefore the expected number of edges in $H_2$ is at least $|H_1|/27 \geq |H_0|/(108k^2)$.
Fix a coloring so that $|H_2| \geq |H_0|/(108k^2)$. Consider the bipartite graph $G$ comprising all pairs of vertices of colors 1 and 2 contained in an edge of $H_2$. Thus, $|G| = |H_2|$ and
$G$ has average degree $d_G \geq d_0/(108k^2)$. For convenience, let $b > 0$ be defined by $d_G = 2bn_0^{1/k}$ so $|G| = bn_0^{1 + 1/k}$. By Lemma~\ref{FS}, there exist constants $\gamma,b_0>0$ such that if $b>b_0$, then $G$ must contain at least $\gamma b^{2k}n_0^2$ copies of $C_{2k}$. Notice that we must have $1/\epsilon>b_0$ when $n$ is large enough. The proof is split into two cases.

{\bf Case 1.} $b \geq 1/\epsilon$. By the pigeonhole principle, there exists an edge $e$ such that the number of $C_{2k}$ containing $e$ in $G$ is at least
\[ \frac{2k\gamma b^{2k}n_0^2}{|G|} = 2k\gamma b^{2k-1}n_0^{1-\frac{1}{k}}.\]
Let $G'$ be the union of all $2k$-cycles in $G$ containing $e$. Then by Lemma~\ref{bigcpn}, for some constant $c$,
\begin{equation*}
 |G'| \geq  cb^{2+\frac{1}{k-1}}n_0^{\frac{1}{k}} = \frac{1}{2}{c b^{1+\frac{1}{k-1}}d_G} \geq \frac{1}{216k^2} c\epsilon^{-1-\frac{1}{k-1}}d_0> D_0
\end{equation*}
provided $n$ is large enough.
Let $C$ be a $2k$-cycle in $G$ containing $e$. Then there exist edges $e_1 \cup \{v_1\},e_2 \cup \{v_2\},\dots,e_{2k} \cup \{v_{2k}\}$
in $H_2$ where $e_1,e_2,\dots,e_{2k} \in C$ and $v_1,v_2,\dots,v_{2k}$ have color 3. Since $H_2$ is $\mathcal{B}_{2k}$-free, for some vertex $z$
we have $v_1 = v_2 = \dots = v_{2k} = z$. Since each cycle $C$ in $G'$ contain $e$, they must have the same $z$. Now the degree of $z$ in $H_2$ is at least
$|G'| > D_0$, which contradicts the fact that $H_0$ has maximum degree at most $D_0$.

{\bf Case 2.} $b<1/\epsilon$. In this case, $d_G < 2n_0^{1/k}/\epsilon$ and so $d_0< (216k^2/\epsilon) n_0^{1/k}$. By Lemma~\ref{alphabound} on $H_0$,
\begin{equation*}
    \alpha(H) \geq \alpha(H_0) \geq \frac{2n_0}{3d_0^{\frac{1}{2}}} \geq \frac{2}{3}\l(\frac{216k^2}{\epsilon}\r)^{-\frac{1}{2}} n_0^{\frac{2k-1}{2k}}\ge\frac{1}{9\sqrt{6}k}n^{\frac{2k-1}{2k}-\frac{5k-2}{2k\sqrt{\log_2 n}}}>n^{\frac{2k-1}{2k}-\frac{5}{2\sqrt{\log n}}}.
\end{equation*}
Now let $n=t^{\frac{2k}{2k-1}+\frac{4}{\sqrt{\log t}}}$. Clearly, $\log n> \frac{2k}{2k-1}\log t$. Hence, an $n$-vertex $\mathcal{B}_{2k}$-free 3-graph $H$ contains an independent set of size
$$
n^{\frac{2k-1}{2k}-\frac{5}{2\sqrt{\log n}}}=t^{(\frac{2k}{2k-1}+\frac{4}{\sqrt{\log t}})(\frac{2k-1}{2k}-\frac{5}{2\sqrt{\log n}})}>t
$$
provided $n$ is large enough. Therefore, we have $R(t,\mathcal{B}_{2k})<t^{\frac{2k}{2k-1}+\frac{4}{\sqrt{\log t}}}$.
\end{proof}
In fact, by more careful computation, we can obtain a slightly better upper bound $R(t,\mathcal{B}_{2k})<t^{\frac{2k}{2k-1}+\frac{c}{\sqrt{\log t}}}$, where $c>\frac{5k-2}{2k-1}\cdot\sqrt{\frac{(2k)\log 2}{2k-1}}$.

\section{Concluding remarks}
\begin{itemize}
    \item Notice that Theorem 2 is valid for odd values of $k$, we believe that Theorem 1 should extend to odd values of $k$. An obstacle to applying the same idea as in the proof for even values of $k$ is that we don't have ``good'' supersaturation for odd cycles. New ideas may be required to complete the proof for odd values.
    \item It seems likely that Theorem 1 can be extended to $r$-uniform hypergraphs with $r \geq 4$, however when following the proof of Theorem 1, two obstacles arise. The first is that one requires supersaturation for Berge cycles in $r$-uniform hypergraphs for $r \geq 3$ (in other words, an $r$-uniform version of Lemma 8). A second obstacle is that an $r$-uniform analog of Lemma 9 is not straightforward: for instance if an edge $e$ in an $r$-graph is contained in $m$ Berge cycles of length $2k$, then the number of edges may be as low as $m^{1/(2k - 1)}$: take a graph $2k$-cycle, and replace one edge with the hyperedge $e$, and each other edge with $m^{1/(2k - 1)}$ hyperedges. We believe these technical obstacles may be overcome (some of the ideas in the recent paper of Mubayi and Yepremyan~\cite{MY} may apply).
\end{itemize}

\section{Acknowledgments}
We would like to thank the anonymous referees for their careful reading of the paper and helpful suggestions. In particular, one of the referee's comments on Berge cycles without non-triviality leads to Theorem~\ref{thm:4bc}.

\end{document}